\documentclass[a4paper,10pt]{article}
\usepackage[utf8]{inputenc}
\usepackage{amsmath,amsthm,amssymb,amsfonts}
\usepackage{xcolor}
\usepackage{accents}
\usepackage[hidelinks]{hyperref}
\usepackage{multicol}
\usepackage{mathtools}
\usepackage{bbold}
\newtheorem{theorem}{Theorem}[section]
\newtheorem{definition}[theorem]{Definition}

\newtheorem{corollary}[theorem]{Corollary}
\newtheorem{lemma}[theorem]{Lemma}
\newtheorem{example}[theorem]{Example}
\newtheorem{remark}[theorem]{Remark}

\def\FF{\mathbb F}

\def\diag{\mathop{\mathrm {diag}}}
\def\R{\mathcal R}
\def\C{\mathcal C}
\def\S{\mathcal S}
\def\dotS{\accentset{\bullet}{\S}}
\def\per{\mathop{\mathrm{ per}}}
\title{Additive preservers of permanent rank}
\author{Alexander Guterman, Bojan Kuzma\thanks{This work is supported in part by the Slovenian Research Agency (research program P1-0285 and research projects 
J1-50000,
N1-0296,
N1-0428,
J1-70047, and 
J1-70046)}, Leonid Ovchinnikov
}
\date{}
\begin{document}
\newcommand{\F}{\mathbb{F}}
\newcommand{\Fp}{\mathbb{F}_p}
\newcommand{\Z}{\mathbb{Z}}
\newcommand{\M}[1]{\operatorname{Mat}_{#1}(\F)}
\newcommand{\charac}{\operatorname{char}}
\newcommand{\prk}{\operatorname{prk}}
\newcommand{\T}{\mathrm{T}}
\renewcommand{\Im}{\mathop{\mathrm{Im}}}
\maketitle
\begin{abstract}
The permanent rank of a matrix $A$ is the size of the maximal square submatrix in $A$ which has a nonzero permanent. In this paper we characterize additive transformations $\Phi$  which preserve matrices of per-rank-one. Under an additional assumption that  $\Phi$ is surjective or that  it preserves per-rank-one in both directions 
we prove that $\Phi$ is a composition of a multiplication with diagonal matrices and permutation matrices from both sides, transposition, and injective endomorphism of the base field.

{\bf Keywords}: permanent rank, additive maps, preservers.

MSC[2020]: {15A86, 05C50, 15A15, 47L05}
\end{abstract}
\section{Introduction and Preliminaries}
The theory of linear preservers originated with Frobenius \cite{Frobenius1897}, who characterized determinant-preserving maps, and was extended by Dieudonné \cite{Dieudonne1949} through his work on singular matrix preservers. These foundational results sparked extensive research on various matrix invariants and their preservers, as documented in \cite{Pierce1992}.

        This paper is concerned with permanent rank, a matrix invariant defined via the permanent function, which serves as an analogue of the ordinary rank. Recall that the permanent is defined similarly  to the determinant: we sum up all generalized diagonals of a matrix, but, unlike the determinant, we do not alternate  signs. More precisely, for a square matrix $A=(a_{ij})_{ij}$ of size $n$-by-$n$, its permanent equals $\per( A)=\sum_{\sigma\in S_n} \prod_{i=1}^n a_{i\sigma(i)}$, where we sum   over all permutations of the set $[n]:=\{1,2,\dots,n\}$. We remark in passing that the permanent naturally arises in combinatorics~\cite{MincMarcus}:  Given a bipartite graph $G$,  the number of perfect matchings equals the permanent of the corresponding adjacency matrix. The permanent rank of a matrix was introduced by Yu~\cite{Yu99} and was used to give a probabilistic  (``almost surely'') solution to the Alon-Jaeger-Tarsi conjecture,  \cite{Alon1989,Jaeger}. Its abstract  properties are studied for example in~\cite{KSh}.
        
    We now recall the definition of the permanent rank from~\cite{Yu99}.
    \begin{definition}
    Let $A \in \M{n}$ be a matrix.  The permanent rank, $\prk(A)$, of the matrix $A$ is the size of the maximal square submatrix in $A$ with nonzero permanent.
    \end{definition}
    In contrast to the usual rank, permanent rank is not invariant under general invertible transformations, making its preservers more rigid and structurally restricted.
    
    While preservers of ordinary rank have been studied much more extensively  (see \cite{Akhmedova2025,Beasley,Botta,Kuzma2002,Loewy,MarcusMoyls,MarcusMoyls2}) the case of permanent rank preservers is relatively new. For instance, the paper \cite{Guterman2024} investigates linear preservers of permanent rank, providing  initial results in this area.
    
        Throughout these notes, we assume that $n \geq 3$ and $\charac(\F) \neq 2$.
        This restriction is clearly necessary since for  fields of characteristic two, the permanent rank coincides with the usual rank, and additive  preservers of rank are already classified,  see \cite{Kuzma2002}. Let us denote the sets of matrices with fixed permanent rank, $\Lambda^k$, and with permanent rank bounded from above, $\Lambda^{\leq k}$, as follows.
         \begin{definition} Let $k\in\{1,2,\dots,n\}$ be an integer.
        \begin{align*}
            \Lambda^k &\coloneqq \{ A \in \M{n} \mid \prk(A) = k \} \\
            \Lambda^{\leq k} &\coloneqq \{ A \in \M{n} \mid \prk(A) \leq k \}
        \end{align*}
    \end{definition}
    For a transformation $\sigma\colon[n]\to[n]$ we let
    $$P_\sigma:=\sum_{i=1}^n E_{i\sigma(i)} \mbox{, \ \  i.e., \ \ }  p_{ij} =
    \begin{cases}
        1, & \text{if } j = \sigma(i) \\
        0, & \text{otherwise.}
    \end{cases}
    $$
    and call it \emph{a generalized permutation matrix induced by $\sigma$}.
    For example, if $\sigma(i)=1$ for each $i$, then
    $$P_\sigma=\begin{pmatrix}
                1 &0&\dots &0\\
                \vdots & \vdots&\ddots & \vdots\\
                1&0&\dots&0
               \end{pmatrix}.$$
Notice that, if $\sigma$ is injective, hence a permutation, then
    $P_\sigma$ is  the permutation matrix, i.e., a matrix with exactly one entry equal to  $1$ in each row and column and all other entries equal to~$0$.    Let us introduce some further notation for the special subsets of $\M{n}$ that we use throughout.  Given  integers $i,j\in[n]:=\{1,\dots,n\}$ and a nonzero $\alpha\in\F$  we call a matrix of the form $\alpha E_{ij}$ \emph{a weighted matrix unit}; the  additive subset (actually, it is even a linear subspace) of the form
    $$\F E_{ij}$$
    \emph{a cell}, and we call additive subsets (actually, linear subspaces) of the form
\begin{align*}
 \R_i&:=\{A=(a_{st})\in \M{n};\;\;a_{st}=0\hbox{ for all } s\neq i\}\\
 \C_j&:=\{A=(a_{st})\in \M{n};\;\;a_{st}=0\hbox{ for all } t\neq j\}
\end{align*}
\emph{an $i$-th row} and \emph{a $j$-th column}, respectively. By  a  \emph{line} we mean either (an unspecified) row or (an unspecified) column. A \emph{full square} is an additive subset (actually, a subspace) of the form
$$\S^{ij}_{st}:=\{\alpha (E_{is}+E_{it})+\beta(E_{js}-E_{jt});\;\;\alpha,\beta\in\FF\}$$
for some $i,j,s,t$ with $i<j$ and $s<t$ (it occupies rows $i,j$ and columns $s,t$), or its transpose, i.e., a space of  the form
$$\dotS^{ij}_{st}:=\{\alpha (E_{is}+E_{js})+\beta(E_{it}-E_{jt});\;\;\alpha,\beta\in\FF\}=\{A^T;\;\ A\in\S^{st}_{ij}\}.$$
Notice that there are $2n$ different lines ($n$ rows and $n$ columns) and there are $2{n\choose 2}^2$ different full squares. \emph{A square} is a subset of a full square; the square is called \emph{nondegenerate} if it is not contained in a row or a column. By  Guterman and Spiridonov  \cite[Lemma 2.3]{Guterman2020} (cf. Lemma~\ref{lemma:prk1_structure} below) these are exactly all the maximal additive sets  consisting of  matrices of per-rank at most one, modulo applying the standard maps.
A sum of $k\ge2$  full squares with pairwise disjoint rows and columns will be called their  direct sum and denoted with $\oplus$. This is well defined only if the size of matrices is at least $2k$.
For example,
$$\S^{12}_{12}=\{\left(\begin{smallmatrix}\alpha &\alpha\\
                      \beta&-\beta                     \end{smallmatrix}\right)\oplus 0_{n-2};\;\;\alpha,\beta\in\FF\}$$
$$\S^{12}_{12}\oplus
\dotS^{34}_{34}=\left\{\left(\begin{smallmatrix}\alpha &\alpha\\
                      \beta&-\beta                     \end{smallmatrix}\right)\oplus\left(\begin{smallmatrix}\gamma &\delta\\
\gamma&-\delta                     \end{smallmatrix}\right)\oplus 0_{n-4};\;\;\alpha,\beta,\gamma,\delta\in\FF\right\},$$
however, neither $\S^{12}_{12}+\dotS^{13}_{45}$ nor $\S^{12}_{12}+\S^{13}_{45}$ is  a direct sum of two full squares, because in both cases the two full squares share the first row.

   Our goal is to  characterize additive maps which preserve matrices of per-rank-one (see Theorem~\ref{th:prk1_main}). 
   Given a matrix $A=(a_{ij})_{ij} 
    \in \M{n}$  and a field endomorphism (i.e., an additive and multiplicative function $\varphi\colon\FF\to\FF$),  we let $A^\varphi$  be a matrix obtained from $A$ by applying $\varphi$ entry-wise, that is,
     $$A^\varphi = \bigl(\varphi(a_{ij})\bigr)_{ij}.$$
    \begin{theorem}
        \label{th:prk1_main} Let $\FF$ be a field of characteristic different from two, let $n\ge 3$ and let $\Phi\colon \M{n} \to \M{n}$ be an additive map such that
        $$\Phi(\Lambda^{1}) \subseteq \Lambda^{1}.$$ Then either
        $$\Phi(\M{n})\subseteq\R_i\  \hbox{ or }\  \Phi(\M{n})\subseteq\C_j\ \hbox{ or } \ \Phi(\M{n})\subseteq (\R_i+\R_{i'})\cap (\C_j+\C_{j'})$$
        for some $i,j,i',j'\in[n]$         or else  there exist transformations $\sigma, \tau \colon[n]\to[n]$, diagonal matrices $D_1, D_2 \in \M{n}$ with nonzero diagonal entries and a nonzero field endomorphism $\varphi$ of $\F$ such that
                \begin{equation}
        \label{eq:form1}
        \Phi(A) =  P_{\sigma}^T D_1A^\varphi D_2 P_{\tau}  \quad \forall A \in \M{n}
        \end{equation}
        or
        \begin{equation}
        \label{eq:form2}
        \Phi(A) =P_{\sigma}^T D_1  (A^\varphi)^T D_2P_{\tau}  \quad \forall A \in \M{n}.
        \end{equation}
      \end{theorem}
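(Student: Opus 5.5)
The strategy is to exploit the structure of maximal additive subsets of $\Lambda^{\le1}$, namely lines and full squares, as described by Guterman and Spiridonov (Lemma~\ref{lemma:prk1_structure}). Since $\Phi$ is additive and $\Phi(\Lambda^1)\subseteq\Lambda^1$, the image $\Phi(\mathcal A)$ of any additive set $\mathcal A\subseteq\Lambda^{\le1}$ is again an additive subset of $\Lambda^{\le1}$. We first note that $\Phi$ kills no nonzero matrix of $\Lambda^1$. Consequently the image of each line $\R_i$ or $\C_j$ lies in a single line or a single full square. The same holds for the image of each full square.

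\textbf{Step 1: cells.} Each cell $\FF E_{ij}$ is mapped injectively into some line or square. Look at the row $\R_i$: it contains $n\ge3$ pairwise independent cells, and any two of them span a subset of $\Lambda^{\le1}$. I would show that $\Phi(\R_i)$ cannot sit inside a full square unless the whole image of $\Phi$ degenerates. The reason is that $\R_i$ and $\C_j$ meet in the cell $\FF E_{ij}$, and a square intersects other lines and squares only in restricted patterns. Using $\charac\FF\neq2$, the sums $E_{ij}+E_{kl}$ with $i\ne k$, $j\ne l$ have per-rank $2$, so such pairs impose no constraint. Whether such sums are sent into $\Lambda^{\le1}$ is exactly what forces either everything into $(\R_i+\R_{i'})\cap(\C_j+\C_{j'})$ or genuine line structure.

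\textbf{Step 2: line-to-line images.} In the nondegenerate case, each row is mapped into a line, and rows of the same type go to lines of the same type. Because rows and columns of $A$ intersect, the map sends all rows into rows (or, after a transposition, into columns) and all columns into columns. This produces transformations $\sigma,\tau$: $\Phi(\R_i)\subseteq\R_{\sigma(i)}$ and $\Phi(\C_j)\subseteq\C_{\tau(j)}$. Hence $\Phi(\FF E_{ij})\subseteq\FF E_{\sigma(i)\tau(j)}$, and we can write $\Phi(xE_{ij})=f_{ij}(x)E_{\sigma(i)\tau(j)}$ with $f_{ij}$ additive and injective. If all rows go into a single row, we land in the case $\Phi(\M n)\subseteq\R_i$, and similarly for columns.

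\textbf{Step 3: functional equations.} This is where full squares enter. For $x,y\in\FF$ the matrix $x(E_{is}+E_{it})+y(E_{js}-E_{jt})$ lies in $\Lambda^{\le1}$, and more generally every $2\times2$ submatrix with zero permanent has per-rank $\le1$. Its image must have zero permanent whenever $\sigma(i)\ne\sigma(j)$ and $\tau(s)\ne\tau(t)$. This gives
$$f_{is}(a)f_{jt}(d)+f_{it}(b)f_{js}(c)=0\quad\text{whenever } ad+bc=0.$$
The standard argument then applies. Normalize with $D_1,D_2$ so that $f_{i1}(1)=f_{1j}(1)=1$. Deduce that $f_{ij}=\lambda_{ij}\varphi$ with $\varphi$ multiplicative, and that $\lambda_{ij}$ factors as $d_i e_j$. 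This yields \eqref{eq:form1}, or \eqref{eq:form2} after transposition.

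\textbf{Main obstacle.} I expect the hardest part to be the degenerate collisions. These occur when $\sigma$ or $\tau$ is non-injective, so that some of the $2\times2$ constraints vanish and the permanent equation does not directly apply. There is also the case where line images are squares rather than lines. My first attempt would be to use the remaining indices: since $n\ge3$, for any pair one can choose a third row or column whose image is separated. I would then propagate the multiplicativity of $\varphi$ through chains $E_{ij}\to E_{ik}\to E_{lk}$, so that the form of $f_{ij}$ is determined even when $\sigma(i)=\sigma(j)$.
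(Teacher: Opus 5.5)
\textbf{Verdict: there are genuine gaps, in Steps~1 and~2, which carry most of the proof.} Your Step~3 is fine in outline and is essentially the paper's Lemmas~\ref{lem:sigma-tau} and~\ref{lem:final-onedireciton}: cells go to cells via $\sigma,\tau$, $(f_{ij}(1))$ is normalized as a rank-one matrix, a third row or column bridges collapsed indices, and zero-permanent $2\times2$ blocks give a single multiplicative $\varphi$. Non-injectivity of $\sigma,\tau$ is in fact the easy part, not the main obstacle.

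\textbf{Step 2.} That $\R_i$ and $\C_j$ meet only gives $0\neq\Phi(E_{ij})\in\Phi(\R_i)\cap\Phi(\C_j)$. This is compatible with a row and a column going into the \emph{same} line. For example, $\Phi(\R_1),\Phi(\C_1)\subseteq\R_1$ together with $\Phi(\R_2),\Phi(\C_2)\subseteq\C_1$ satisfies every intersection requirement. So ``rows go to rows and columns to columns'' (and ``rows of the same type go to lines of the same type'') does not follow, and $\sigma,\tau$ are not yet defined. Ruling this out, unless $\Im\Phi$ lies in one line, needs a real argument, which the paper gives in Lemma~\ref{lem:R1-C1}. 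There it first forces every line into $\R_1\cup\C_1$. It then evaluates $\Phi$ on the per-rank-one matrices $\mu\nu E_{11}+\mu E_{12}+\nu E_{21}-E_{22}$, with $\mu,\nu$ in the prime field. This yields a $2\times2$ block with permanent $-\mu\nu b_{11}b_{22}\neq0$.

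\textbf{Step 1.} This step contains no argument, and the mechanism you sketch cannot work. $\Phi$ preserves $\Lambda^1$ only in one direction, so nothing is known about images of per-rank-two matrices such as $E_{ij}+E_{kl}$; they cannot ``force'' anything. Also, $\prk(E_{11}+E_{22})=2$ has nothing to do with $\charac\FF\neq2$. The constraints must come from per-rank-one matrices straddling two lines, combined with the list of maximal additive subsets of $\Lambda^{\le1}$. After normalizing to $\Phi(\R_1)\subseteq\S^{12}_{12}$, the paper needs three separate steps:
\begin{itemize}
\item Entries at $(i,j)$ with $i,j\ge3$ vanish. Otherwise $\Phi(\C_1)\ni\Phi(E_{11}),\Phi(E_{21})$ would occupy three columns and two rows (Lemma~\ref{lem:46}).
\item Entries in rows $1,2$ and columns $\ge3$ vanish. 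This uses $(E_{11}+xE_{1j})+(E_{21}-xE_{2j})$ and its variant with $2E_{11}+2xE_{1j}$, and is where $\charac\FF\neq2$ is genuinely used (Lemma~\ref{lem:line-square}).
\item Entries in rows $\ge3$ vanish. This uses $(bE_{1j}-cE_{1j'})+\lambda(bE_{2j}+cE_{2j'})$ over $\lambda$ in the prime field, which forces $\Phi(\R_1)\subseteq\R_2$, plus a separate treatment of the case $\Phi(\R_2)\subseteq\R_3$ (Lemma~\ref{lem:line->squarepart2}).
\end{itemize}
You supply neither these arguments nor an alternative. As a result, the degenerate alternative $\Phi(\M{n})\subseteq(\R_i+\R_{i'})\cap(\C_j+\C_{j'})$ is asserted rather than proved.
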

Let us show that the forms \eqref{eq:form1}--\eqref{eq:form2} might define  a per-rank-one preserver even if $P_\sigma$ or $P_\tau$ are not permutation matrices.
 \begin{example}
Let $\FF$ be a field such that there exists a nonsurjective nonzero endomorphism $\varphi\colon\FF\to\FF$ (for example, the field of complex numbers has this property, as well as the field of rational functions in a single indeterminate over some ambient field -- the nonsurjective endomorphism is a squaring of the indeterminate). Choose $w\in\FF\setminus\varphi(\FF)$,  define
$$D:=\diag(w,1,\dots,1)$$
and let
$$\sigma\colon i\mapsto\begin{cases}
                        1; & i=1 \hbox{ or } i=2\\
                        i; & i\ge 3
                       \end{cases}$$
be  a  transformation on $[n]$.  Then, the map
$$\Phi\colon A\mapsto P_\sigma^T DA^\varphi$$
preserves per-rank-one even though $P_\sigma$ is not a permutation matrix. 
Namely, for $A=(a_{ij})_{ij}$ we have 
$$\Phi(A)=\begin{pmatrix} wa_{11}^\varphi+a_{21}^\varphi & wa_{12}^\varphi+a_{22}^\varphi &\dots\\
0 & 0 & \dots\\
a_{31}^\varphi & a_{32}^\varphi & \dots\\
\vdots &&\ddots
\end{pmatrix}$$
and if all $2$-by-$2$ per-minors of $A$ vanish, then clearly the same holds for per-minors of $\Phi(A)$ which do not contain the first row. But even $2$-by-$2$ per-minors of the first and, say, the third row  vanish since they equal 
$$\begin{aligned}
    \per\begin{pmatrix}
    wa_{1i}^\varphi+a_{2i}^\varphi &wa_{1j}^\varphi+a_{2j}^\varphi\\
    a_{3i}^\varphi & a_{3j}^\varphi
\end{pmatrix}&=w(a_{1i}^\varphi a_{3j}^\varphi+a_{1j}^\varphi a_{3i}^\varphi)+(a_{2i}^\varphi a_{3j}^\varphi+a_{2j}^\varphi a_{3i}^\varphi)\\
&=w\per \begin{pmatrix} a_{1i}^\varphi & a_{1j}^\varphi\\
a_{3i}^\varphi & a_{3j}^\varphi
\end{pmatrix}+\per\begin{pmatrix} a_{2i}^\varphi & a_{2j}^\varphi\\
a_{3i}^\varphi & a_{3j}^\varphi
\end{pmatrix}=0.\end{aligned}$$
Also, since $w\in\FF\setminus\varphi(\FF)$, the first row of $\Phi(A)$ vanishes if and only if the first two rows of $A$ vanish, while each other row of $\Phi(A)$ vanishes if and only if it vanishes in $A$. Hence, $\Phi(A)=0$ if and only if $A=0$. Thus, if $A$ is of per-rank one, then the same holds for $\Phi(A)$.
\end{example}
\begin{remark}\label{rem:phisurjective}
Such example is impossible if the field~$\FF$ does not allow nonzero nonsurjective endomorphisms (the field of rational or real numbers,  any finite field, or  the field of $p$-adic numbers have this property; see, e.g., \cite[Corollary 4.2]{Conrad}, or \cite{Wagner1974}, whose proof for automorphisms extends verbatim to endomorphisms).  
Namely, if $\sigma$ is noninjective, say, $\sigma(i_1)=\sigma(i_2)$ for $i_1\neq i_2$, consider a matrix $A=E_{i_11}- w E_{i_21}\in\Lambda^1$, where $w$ is such that $$\varphi(w) =\tfrac{d_{i_1}}{d_{i_2}}\in\FF=\varphi(\FF); \qquad D_1=\diag(d_{1},\dots,d_{n}).$$
 Then,
 \begin{align}
   \Phi(A)&= P_\sigma^T D_1 (E_{i_11}- \varphi(w) E_{i_21})D_2P_\tau\\
   &=\sum E_{\sigma(k)k}(d_{i_1}E_{i_11}- \varphi(w) d_{i_2}E_{i_21})D_2P_\tau\\
   &=(d_{i_1}E_{\sigma(i_1)1}-\varphi(w)d_{i_2}E_{\sigma(i_2)1})D_2P_\tau=0\notin\Lambda^1
 \end{align}
 Likewise we show that $\tau$ is injective, so both $P_\sigma$ and $P_\tau$ are permutation matrices.
\end{remark}
If we add more regularity on $\Phi$, then we can expect better results.
\begin{theorem}\label{th:prk1_maina} Under the assumptions of Theorem~\ref{th:prk1_main}, assume further that $\Phi$ is surjective or that  it preserves per-rank-one in both directions. 
Then, $\Phi$ takes the forms~\eqref{eq:form1}--\eqref{eq:form2} with injective  transformations $\sigma$ and $\tau$  
so $P_\sigma$ and $P_\tau$ are permutation matrices.
\end{theorem}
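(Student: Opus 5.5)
We derive Theorem~\ref{th:prk1_maina} from Theorem~\ref{th:prk1_main}. First we exclude the degenerate alternatives, and then we show that $\sigma$ and $\tau$ are injective.

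\emph{Degenerate cases.} If $\Phi$ is surjective, its image is all of $\M{n}$. This is not contained in a single line, nor in $(\R_i+\R_{i'})\cap(\C_j+\C_{j'})$, since $n\ge3$ forces that set to be a proper subspace. Now suppose instead that $\Phi$ preserves per-rank-one in both directions, meaning $A\in\Lambda^1\iff\Phi(A)\in\Lambda^1$. Then $\Phi$ kills no per-rank-one matrix. Every matrix of per-rank at least two maps outside $\Lambda^1$, so if $\Phi(A)=0$ with $A\neq0$ we get a contradiction by adding a suitable weighted matrix unit. Concretely, pick a unit $\alpha E_{ij}$ with $A+\alpha E_{ij}\in\Lambda^1$ and $\Phi(\alpha E_{ij})\in\Lambda^1$, or pick $B\in\Lambda^1$ with $A+B\notin\Lambda^{\le1}$ while $\Phi(A+B)=\Phi(B)\in\Lambda^1$. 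Such a $B$ exists because $A\neq0$: take $B=E_{st}$ in a row and column avoiding a nonzero entry $a_{ij}$ of $A$, chosen with $\per$ of the $2\times2$ submatrix on rows $i,s$ and columns $j,t$ nonzero, possibly after scaling $B$. Hence $\Phi$ is injective. An injective additive map whose image lies in a line or in a $2\times2$ block pattern would send $\M{n}$ into a space whose maximal per-rank-at-most-one subsets are too small. More directly, we find two per-rank-one matrices whose images both lie in such a small space and whose sum has per-rank two, yet the sum of the images has per-rank at most one. For a line, everything in the image has per-rank at most one, so take $E_{11}+E_{22}$, which has per-rank two while its image has per-rank at most one. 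For the $2\times 2$ block, use $E_{11}+E_{22}+E_{33}$, which has per-rank three; its image in a $2\times2$ block is nonzero by injectivity, yet must lie outside $\Lambda^1$, so it has per-rank two. Then take a suitable per-rank-$\ge2$ combination whose image falls into $\Lambda^1$, obtained by dimension count since the space has dimension $4n^2$ over the prime field versus the block's smaller size. I expect this case to need the most care; I would first try pure counting of the ranks of images of $E_{ij}$.

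\emph{Injectivity of $\sigma,\tau$.} Now $\Phi$ has form~\eqref{eq:form1} or~\eqref{eq:form2}; by composing with transposition assume~\eqref{eq:form1}. Suppose $\sigma(i_1)=\sigma(i_2)$ with $i_1\neq i_2$. In the surjective case, row $k\notin\sigma([n])$ of every $\Phi(A)$ is zero, so $\Phi$ is not surjective; likewise for $\tau$. In the two-direction case, take $A=E_{i_11}+E_{i_22}$, which has per-rank two. We have $\Phi(A)=P_\sigma^TD_1AD_2P_\tau$, whose only nonzero rows lie in row $\sigma(i_1)$. Being nonzero, as an image of a nonzero matrix under the injective $\Phi$ established above, it belongs to $\Lambda^1$. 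This contradicts the reverse implication. The same argument applied to columns handles $\tau$. Hence $P_\sigma,P_\tau$ are permutation matrices.
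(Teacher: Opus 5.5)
Most of your argument is sound. In the surjective case, Theorem~\ref{th:prk1_main} together with the observation that a noninjective $\sigma$ leaves some row of every $\Phi(A)$ zero is a valid, shorter alternative to the paper's dimension count. In the two-sided case, your proof that $\Phi$ is injective works, though only your second alternative ($B=\lambda E_{st}$ with $\prk(A+B)\ge 2$) actually yields a contradiction. The exclusion of images inside a line and the final step for $\sigma,\tau$ are also correct.

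The genuine gap is the remaining degenerate alternative $\Phi(\M{n})\subseteq(\R_i+\R_{i'})\cap(\C_j+\C_{j'})$ for two-sided preservers. You do not actually treat it, and the counting idea you sketch cannot work. An additive map is only linear over the prime subfield, and $\F$ may have infinite degree over it (e.g.\ $\F=\mathbb{C}$). Then $\M{n}$ and a $2\times 2$ block have the same dimension over the prime subfield, and the figure $4n^2$ has no meaning here. In fact the last example of Section~1 gives an \emph{injective} semilinear map $\M{n}\to\M{2}\oplus 0_{n-2}$ preserving $\Lambda^{\le1}$. So injectivity plus counting cannot produce a per-rank-two matrix with per-rank-one image; the reverse implication has to be applied to specific matrices. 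The paper does this in Lemma~\ref{lem:TODO}, which shows that a two-sided preserver maps no line into a nondegenerate square. After that, the proof of Theorem~\ref{th:prk1_main} shows that lines go into lines, which yields \eqref{eq:form1}--\eqref{eq:form2} directly.

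You can close the gap inside your own framework as follows. Identify the block with $\M{2}$ and set $q=\per$, a nondegenerate quadratic form there since $\charac\F\neq2$. Let $B(X,Y)=q(X+Y)-q(X)-q(Y)$ and $X_{ij}=\Phi(E_{ij})$. Two facts follow from injectivity and two-sided preservation:
\begin{enumerate}
\item For $(i,j)\neq(k,l)$, the value $B(X_{ij},X_{kl})=q(X_{ij}+X_{kl})$ is zero exactly when $E_{ij},E_{kl}$ lie in a common line.
\item The per-rank-one matrices $E_{ij}+E_{il}+E_{kj}-E_{kl}$ force $B(X_{ij},X_{kl})=B(X_{il},X_{kj})$.
\end{enumerate}
The pairwise orthogonal isotropic vectors $X_{11},X_{12},X_{13}$ span a totally isotropic subspace, so its dimension is at most $2$. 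Also $X_{11},X_{12}$ are independent, since $B(X_{21},\cdot)$ vanishes on the first but not the second. Hence $X_{13}=aX_{11}+bX_{12}$, and likewise $X_{23}=a'X_{21}+b'X_{22}$. Put $\beta:=B(X_{11},X_{22})=B(X_{12},X_{21})\neq0$. The second fact, applied to rows $\{1,2\}$ with columns $\{1,3\}$ and then $\{2,3\}$, gives $b'=b$ and $a'=a$. Therefore $0=B(X_{13},X_{23})=2ab\beta$, so $a=0$ or $b=0$. But $a=0$ gives $B(X_{22},X_{13})=0$, and $b=0$ gives $B(X_{21},X_{13})=0$; each contradicts the first fact.
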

For surjective maps  Theorem~\ref{th:prk1_maina} follows from Remark~\ref{rem:phisurjective}. However, we prefer to give a self-contained short proof, which does not require the machinery needed to prove Theorem~\ref{th:prk1_main}. This will be done immediately after we prove Theorem~\ref{th:prk1_main}. 

We end this section with one final example of nonstandard per-rank-one preservers whose range is contained in  a $2$-by-$2$ block. 
\begin{example}
If $n=2$ there exists a linear bijection which preserves per-rank-one in both directions and is not of standard form~\eqref{eq:form1}--\eqref{eq:form2}. Consider, e.g.,   
\[
    \Phi\colon\left(\begin{matrix} a & b \\ c & d \end{matrix}\right)\mapsto
\begin{pmatrix}
 a + b + c - d & a + b - c + d \\
 a - b + c + d & -a + b + c + d
\end{pmatrix}
\]
and notice that the permanent of the matrix on the right equals $4 (b c + a d)$. Hence, $\Phi$ preserves per-rank-one in both directions. One also sees it is invertible with inverse $\Phi^{-1}\colon \left(
\begin{smallmatrix}
 x & y \\
 u & v \\
\end{smallmatrix}
\right)\mapsto
\frac{1}{4} \left(\begin{smallmatrix}
 (u-v+x+y) &  (-u+v+x+y) \\
(u+v+x-y) &  (u+v-x+y) \\
\end{smallmatrix}\right)
$.

This can be generalized to higher dimensions as well. One can show that the following linear map  $\Phi\colon \M{n}\to \M{2}\oplus 0_{n-2}$ (defined on matrix units) preserves the set $\Lambda^{\le 1}$ of matrices of per-rank-one or zero.
\[
\bigl(\Phi(E_{ij})\bigr)_{ij}
=
\begin{pmatrix}

\left(\begin{smallmatrix}
1&1\\
1&-1
\end{smallmatrix}\right)
&
\left(\begin{smallmatrix}
1&1\\
-1&1
\end{smallmatrix}\right)
&
r_3\left(\begin{smallmatrix}
1&1\\
1&-1
\end{smallmatrix}\right)
&
r_4\left(\begin{smallmatrix}
1&1\\
1&-1
\end{smallmatrix}\right)
&
\cdots
\\[2ex]

\left(\begin{smallmatrix}
1&-1\\
1&1
\end{smallmatrix}\right)
&
\left(\begin{smallmatrix}
-1&1\\
1&1
\end{smallmatrix}\right)
&
r_3\left(\begin{smallmatrix}
1&-1\\
1&1
\end{smallmatrix}\right)
&
r_4\left(\begin{smallmatrix}
1&-1\\
1&1
\end{smallmatrix}\right)
&
\cdots
\\[2ex]

c_3\left(\begin{smallmatrix}
1&1\\
1&-1
\end{smallmatrix}\right)
&
c_3\left(\begin{smallmatrix}
1&1\\
-1&1
\end{smallmatrix}\right)
&
c_3r_3\left(\begin{smallmatrix}
1&1\\
1&-1
\end{smallmatrix}\right)
&
c_3r_4\left(\begin{smallmatrix}
1&1\\
1&-1
\end{smallmatrix}\right)
&
\cdots
\\[2ex]

c_4\left(\begin{smallmatrix}
1&1\\
1&-1
\end{smallmatrix}\right)
&
c_4\left(\begin{smallmatrix}
1&1\\
-1&1
\end{smallmatrix}\right)
&
c_4r_3\left(\begin{smallmatrix}
1&1\\
1&-1
\end{smallmatrix}\right)
&
c_4r_4\left(\begin{smallmatrix}
1&1\\
1&-1
\end{smallmatrix}\right)
&
\cdots
\\[2ex]

\vdots&\vdots&\vdots&\vdots&\ddots
\end{pmatrix}.
\]
By comparing  the dimensions of its domain and its range, such linear map cannot be injective if $n\ge 3$. However, suppose $\Phi$ is semilinear under some nonsurjective field endomorphism $\sigma\colon\FF\to\FF$ such that $\FF/\sigma(\FF)$ is a transcendental field extension with  $t\in\FF$ as indeterminate over $\sigma(\FF)$. Here, we can   choose $c_i=t^{i}$ and $r_j=t^{(n+1)^j}$. Then, due to the uniqueness of integer expansion in basis $n+1$, $c_ir_j=c_{i'}r_{j'}$ if and only if $(i,j)=(i',j')$ so $\Phi$ is even injective.  However, it may not preserve $\Lambda^1$ in both directions.
\end{example}
    \section{Auxiliary results}
 The following  lemma is well-known. We omit its  straightforward proof.
    \begin{lemma}
    The permanent rank of a matrix is invariant under the following matrix operations.
    \begin{enumerate}
        \item Transposition, i.e. $A \mapsto A^\T$.
        \item Row permutations, i.e. $A \mapsto P_\sigma A$, where $\sigma \in S_n$.
        \item Column permutations, i.e. $A \mapsto A P_\tau$, where $\tau \in S_n$.
        \item Row nonzero rescaling, i.e. $A \mapsto D A$, where $D = \diag(d_1, \ldots, d_n) \in \M{n}$ is a diagonal matrix with nonzero diagonal entries $d_i \neq 0$.
        \item Column nonzero rescaling, i.e. $A \mapsto A D$, where $D = \diag(d_1, \ldots, d_n) \allowbreak \in \M{n}$ is a diagonal matrix with nonzero diagonal entries $d_i \neq 0$.
        \item Injective field endomorphisms, i.e. $A \mapsto A^\varphi$ where $\varphi$ is an injective endomorphism of $\F$ and $(A^\varphi)_{ij} = \varphi(A_{ij})$.
    \end{enumerate}
    \end{lemma}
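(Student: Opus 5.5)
The six invariances can be checked separately. Each invariance comes down to how the operation acts on square submatrices and their permanents. Since each operation is a bijection on $\M{n}$ (except item 6, handled separately), it suffices to show $\prk(f(A))\ge\prk(A)$. Equality then follows, because the inverse operation is of the same type: $(P_\sigma)^{-1}=P_{\sigma^{-1}}$ and $D^{-1}$ is again diagonal with nonzero entries. For item 6 we only need that the per-rank cannot increase or decrease, which we argue directly.

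\emph{Transposition.} A $k\times k$ submatrix of $A^\T$ on rows $I$ and columns $J$ is $(A[J,I])^\T$. Also $\per(B^\T)=\sum_\sigma\prod_i b_{\sigma(i)i}=\sum_\sigma\prod_i b_{i\sigma^{-1}(i)}=\per B$, by reindexing through $\sigma\mapsto\sigma^{-1}$. So nonzero-permanent submatrices of $A$ and $A^\T$ correspond bijectively with the same sizes.

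\emph{Permutations.} Left multiplication by $P_\sigma$ permutes rows, and right multiplication by $P_\tau$ permutes columns. Hence any submatrix of $P_\sigma A$ equals a submatrix of $A$ with its rows reordered. The permanent is invariant under reordering rows or columns, because reindexing the sum over $S_k$ by composition with a fixed permutation is a bijection of $S_k$.

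\emph{Scalings.} For $D=\diag(d_1,\dots,d_n)$, the submatrix $(DA)[I,J]$ equals $\diag(d_i)_{i\in I}\,A[I,J]$. Each term of the permanent contains exactly one entry from each row, so $\per\bigl((DA)[I,J]\bigr)=\bigl(\prod_{i\in I}d_i\bigr)\per A[I,J]$. The factor is nonzero, so vanishing is preserved. Columns are handled the same way, or by combining this case with transposition.

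\emph{Field endomorphisms.} Since $\varphi$ is additive and multiplicative, $\per(B^\varphi)=\varphi(\per B)$ for every square $B$. A nonzero ring endomorphism of a field is injective, and the statement assumes injectivity anyway. Hence $\varphi(x)=0$ if and only if $x=0$. Since $(A^\varphi)[I,J]=(A[I,J])^\varphi$, the submatrices with nonzero permanent occur in exactly the same positions for $A$ and $A^\varphi$, so the per-ranks agree even though $\varphi$ need not be surjective.

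There is no real obstacle here. The only point needing care is item 6: $\varphi$ may fail to be bijective, so we cannot simply invert it, and instead we use injectivity to get $\varphi(x)=0\iff x=0$.
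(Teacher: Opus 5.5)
Your proof is correct. The paper omits the proof of this lemma as well known and straightforward, and your argument is exactly the routine verification it has in mind: each operation induces a size-preserving correspondence between square submatrices, under which the permanent is either unchanged, multiplied by a nonzero scalar, or sent through the injective map $\varphi$ via $\per(B^\varphi)=\varphi(\per B)$.
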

   \noindent  The mappings of the first five types will hereafter be referred to as \emph{standard}.

We now present several results on maximal additive subsets of matrices of bounded permanent rank. Here, "additive" means closed under matrix addition, and "maximal" indicates that the subset is not properly contained in any other additive subset whose permanent rank does not exceed the same upper bound.  Understanding the structure of these subsets, as well as how our mapping acts on them, will be an essential tool in the subsequent analysis. We should mention that among the next four results we only require Lemma~\ref{lem:existence}, however we decided to present them in full 
for our future use.
    
To prove the next lemma we will use the following result by Alon \cite{Alon1999}.
    \begin{lemma} \cite[Lemma 2.1]{Alon1999}
    \label{lemma:Alon}
    Let $f = f(x_1, x_2, \ldots, x_n)$ be a polynomial in $n$ variables over an arbitrary field $\F$. Suppose that the degree of $f$ as a polynomial in $x_i$ is at most $t_i$ for $1 \leq i \leq n$, and let $S_i \subset \F$ be a set of at least $t_i + 1$ distinct members of  $\F$. If $f(x_1, x_2, \ldots, x_n) = 0$ for all $n$-tuples $(x_1, x_2, \ldots, x_n) \in S_1 \times S_2 \times \cdots \times S_n$, then $f \equiv 0$.
    \end{lemma}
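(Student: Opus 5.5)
This is Alon's Combinatorial Nullstellensatz lemma (Lemma 2.1 of \cite{Alon1999}). I would prove it by induction on the number of variables $n$, using the one-variable fact that a nonzero polynomial of degree at most $t$ over a field has at most $t$ roots.

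\textbf{Base case} $n=1$. Here $f(x_1)$ has degree at most $t_1$ and vanishes on $S_1$, which has at least $t_1+1$ distinct elements. A nonzero polynomial of degree $\le t_1$ has at most $t_1$ roots, since a field is an integral domain and we may factor out $(x_1-a)$ for each root $a$. Hence $f\equiv 0$.

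\textbf{Inductive step.} Assume the statement for $n-1$ variables and expand $f$ in powers of the last variable:
$$f(x_1,\dots,x_n)=\sum_{k=0}^{t_n} f_k(x_1,\dots,x_{n-1})\,x_n^k .$$
Each coefficient $f_k$ is a polynomial in $x_1,\dots,x_{n-1}$ whose degree in $x_i$ is at most $t_i$, because it is assembled from monomials of $f$.

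Fix an arbitrary $(a_1,\dots,a_{n-1})\in S_1\times\dots\times S_{n-1}$. The one-variable polynomial
$$g(x_n)=\sum_k f_k(a_1,\dots,a_{n-1})\,x_n^k$$
has degree at most $t_n$ and vanishes on all of $S_n$. By the base case, every coefficient $f_k(a_1,\dots,a_{n-1})$ is $0$.

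So each $f_k$ vanishes on $S_1\times\dots\times S_{n-1}$, and the induction hypothesis gives $f_k\equiv 0$ for every $k$. Therefore $f\equiv 0$ as a formal polynomial.

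There is no real obstacle. The only point needing care is to check that the degree bounds in $x_1,\dots,x_{n-1}$ pass to the coefficients $f_k$, which is immediate from the monomial expansion. The conclusion $f\equiv 0$ is meant in the polynomial ring, not merely as a function; this distinction matters over finite fields, and the argument above yields the stronger, formal statement.
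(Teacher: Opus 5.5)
Your proof is correct, and it is the standard argument. The paper gives no proof of this lemma and simply cites Alon. Your induction on the number of variables is exactly Alon's original proof: expand in powers of $x_n$, use the one-variable root bound to kill each coefficient $f_k$ at every point of $S_1\times\cdots\times S_{n-1}$, then apply the induction hypothesis.
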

    \begin{lemma}\label{lem:VS}
    Let $\mathcal{A}$ be an additive  subset of $\Lambda^{\leq k}$ and $\charac(\F) = 0$ or $\charac(\F) > k+2$. Then $\alpha_1 A_1 + \alpha_2 A_2 + \cdots + \alpha_m A_m$ belongs to $\Lambda^{\leq k}$ for all $\alpha_1, \ldots, \alpha_m \in \F$ and $A_1, \ldots, A_m \in \mathcal{A}$.
    \end{lemma}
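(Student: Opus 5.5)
The plan is to reduce everything to a polynomial identity in the scalars and then apply Lemma~\ref{lemma:Alon}. Fix $A_1,\dots,A_m\in\mathcal A$ and a $(k+1)\times(k+1)$ submatrix, given by row set $I$ and column set $J$. Define
$$f(x_1,\dots,x_m):=\per\bigl((x_1A_1+\dots+x_mA_m)[I,J]\bigr).$$
Each entry of the submatrix is a linear form in $x_1,\dots,x_m$, and the permanent is multilinear in the rows. Hence $f$ is a polynomial of total degree at most $k+1$, so its degree in each $x_i$ is at most $t_i=k+1$. It suffices to show that $f\equiv 0$ for every such $I,J$. Evaluating at $(\alpha_1,\dots,\alpha_m)$ then shows that every $(k+1)$-per-minor of $\sum\alpha_iA_i$ vanishes. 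Consequently every larger per-minor vanishes as well, by Laplace-type expansion of the permanent along a row into $(k+1)$-sized pieces inductively. Therefore $\sum\alpha_iA_i\in\Lambda^{\le k}$.

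To verify $f\equiv0$, I take $S_i=\{0,1,\dots,k+1\}$, viewed inside the prime subfield. Under the hypothesis $\charac\F=0$ or $\charac\F>k+2$, these are $k+2=t_i+1$ distinct elements of $\F$, and this is exactly where the characteristic assumption enters. For $(x_1,\dots,x_m)\in S_1\times\dots\times S_m$, each $x_i$ is a nonnegative integer $n_i$. Thus $\sum x_iA_i=\sum n_iA_i=A_1+\dots+A_1+A_2+\dots$ is a finite sum of elements of $\mathcal A$. Additivity of $\mathcal A$ puts this sum in $\mathcal A\subseteq\Lambda^{\le k}$; the all-zero tuple gives the zero matrix, which also lies in $\Lambda^{\le k}$. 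Hence $f$ vanishes on the whole grid, and Lemma~\ref{lemma:Alon} gives $f\equiv0$.

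The only points needing care are bookkeeping. The first is the per-variable degree bound, which follows from multilinearity in rows: each row contributes degree at most one in every variable. The second is that "permanent rank $\le k$" is equivalent to the vanishing of all $(k+1)$-per-minors. This holds because the permanent of an $r\times r$ matrix with $r>k+1$ expands along a row as a sum of entries times $(r-1)$-per-minors, so by induction on $r$ all per-minors of size at least $k+1$ vanish. I expect no real obstacle. The main subtlety is ensuring that $S_i$ has $k+2$ distinct elements realized as nonnegative integer multiples, which is precisely what the hypothesis on the characteristic guarantees.
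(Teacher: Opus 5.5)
Your proof is correct and follows essentially the same route as the paper. Both arguments regard the $(k+1)\times(k+1)$ per-minor of $\sum x_iA_i$ as a polynomial in the coefficients, note that by additivity it vanishes on a grid of integer multiples, and conclude $f\equiv0$ by Lemma~\ref{lemma:Alon}.

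Two of your details are actually more careful than the paper's version:
\begin{itemize}
\item You use the grid $\{0,1,\dots,k+1\}$ of nonnegative integers. The paper speaks of ``nonzero elements of the prime subfield'', which in characteristic zero is only justified by additivity for positive integers. Your grid also needs only $\charac\F>k+1$.
\item You spell out why the vanishing of all $(k+1)$-per-minors forces $\prk\le k$, which the paper leaves implicit.
\end{itemize}
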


    \begin{proof}
    Fix a natural number $m$ and matrices $A_1, A_2, \ldots, A_m \in \mathcal{A}$. Let $f \colon \mathbb{F}^m \to \mathbb{F}$ be defined by
\[
f(\alpha_1, \alpha_2, \ldots, \alpha_m) \coloneqq \operatorname{per}\bigl((\alpha_1 A_1 + \alpha_2 A_2 + \cdots + \alpha_m A_m)[I|J]\bigr),
\]
where $(\cdot)[I|J]$ denotes the $(k+1)\times(k+1)$ submatrix on rows $I$ and columns $J$, for some fixed index sets $I, J \subseteq [n]$ with $|I|=|J|=k+1$. Our goal is to show that $f \equiv 0$.

    Note that $f$ is a polynomial of degree at most $k + 1$ which, by additivity of $\mathcal{A}$, vanishes when variables are specialized to nonzero elements from the prime subfield ${\mathbb k}\subseteq\F$, here ${\mathbb k}= \F_p$ (or $\mathbb Z/p\mathbb Z$) if the characteristic $\charac(\F) =p$, and ${\mathbb k}= \mathbb Q, $ if $\charac(\F) =0$. 
     Since $\charac(\F) = 0$ or $\charac(\F) > k+2$, the prime  subfield  $\mathbb k$ contains at least $k + 2$ nonzero elements.
     It then follows from Lemma~\ref{lemma:Alon} that $f \equiv 0$.
    \end{proof}
    \begin{corollary} \label{cor:max}
    Let $k\in[n]$ and let the characteristic of $\F$ be as in Lemma~\ref{lem:VS}. Maximal  additive subsets in $\Lambda^{\le k}$ are $\FF$-linear subspaces.
    \end{corollary}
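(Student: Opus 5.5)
Let $\mathcal M$ be a maximal additive subset of $\Lambda^{\le k}$. The plan is to compare $\mathcal M$ with its $\F$-linear span $\operatorname{span}_\F\mathcal M$ and show the two coincide. Once that is done, $\mathcal M$ is automatically an $\F$-linear subspace.

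First I show that the span still lies in $\Lambda^{\le k}$. Every element of $\operatorname{span}_\F\mathcal M$ is a finite combination $\alpha_1A_1+\dots+\alpha_mA_m$ with $\alpha_i\in\F$ and $A_i\in\mathcal M$. Since $\mathcal M$ is an additive subset of $\Lambda^{\le k}$ and the characteristic hypothesis of Lemma~\ref{lem:VS} holds, that lemma applies directly with $\mathcal A=\mathcal M$. It gives $\alpha_1A_1+\dots+\alpha_mA_m\in\Lambda^{\le k}$, so $\operatorname{span}_\F\mathcal M\subseteq\Lambda^{\le k}$.

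Next, the span is closed under addition and contains $\mathcal M$, so it is an additive subset of $\Lambda^{\le k}$ containing $\mathcal M$. Maximality of $\mathcal M$ then forces $\operatorname{span}_\F\mathcal M=\mathcal M$, and hence $\mathcal M$ is an $\F$-linear subspace.

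There is no genuine obstacle here; the only point to check is the degenerate case. If $\mathcal M$ were empty, it would not be maximal, since $\{0\}$ is additive and lies in $\Lambda^{\le k}$. So $\mathcal M\neq\emptyset$, its span is well defined, and it contains $0$.
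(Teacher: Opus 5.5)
Your proof is correct and follows the same reasoning the paper leaves implicit (the corollary is stated right after Lemma~\ref{lem:VS} with no separate proof). Lemma~\ref{lem:VS} puts $\operatorname{span}_\F\mathcal M$ inside $\Lambda^{\le k}$, and since this span is an additive set containing $\mathcal M$, maximality forces it to equal $\mathcal M$.
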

    \begin{lemma}
    \label{lem:existence}
    Every additive subset  of $\Lambda^{\leq k}$ and in particular every   matrix $A \in \Lambda^{\leq k}$ is contained in a maximal additive subset of $\Lambda^{\leq k}$.
    \end{lemma}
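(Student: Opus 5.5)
Since the ambient matrix space is infinite when $\FF$ is infinite, I will use Zorn's lemma rather than a counting argument. Fix an additive subset $\mathcal A_0\subseteq\Lambda^{\le k}$; a single matrix $A\in\Lambda^{\le k}$ is covered by taking $\mathcal A_0$ to be the additive set generated by $A$, namely $\{mA;\ m\in\mathbb Z_{\ge1}\}$ (or $\mathbb ZA$). This set lies in $\Lambda^{\le k}$ because every per-minor of $mA$ equals $m^{s}$ times the corresponding per-minor of $A$, where $s$ is the size of the minor. So it suffices to treat additive subsets. Let $\mathfrak P$ be the family of all additive subsets $\mathcal B$ with $\mathcal A_0\subseteq\mathcal B\subseteq\Lambda^{\le k}$, partially ordered by inclusion. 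It is nonempty since it contains $\mathcal A_0$.

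Next I will show that every chain has an upper bound in $\mathfrak P$. Let $\{\mathcal B_\lambda\}$ be a nonempty chain and put $\mathcal B:=\bigcup_\lambda\mathcal B_\lambda$. Clearly $\mathcal A_0\subseteq\mathcal B\subseteq\Lambda^{\le k}$. For additivity, take $X,Y\in\mathcal B$, say $X\in\mathcal B_\lambda$ and $Y\in\mathcal B_\mu$. Because the family is a chain, both lie in the larger of the two sets, which is additive, so $X+Y$ lies in that set and hence in $\mathcal B$. Zorn's lemma then gives a maximal element $\mathcal M$ of $\mathfrak P$.

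Finally I will check that $\mathcal M$ is maximal in the sense of the paper, i.e.\ among all additive subsets of $\Lambda^{\le k}$ and not just those containing $\mathcal A_0$. Suppose $\mathcal M\subseteq\mathcal N$ with $\mathcal N$ an additive subset of $\Lambda^{\le k}$. Then $\mathcal A_0\subseteq\mathcal M\subseteq\mathcal N$, so $\mathcal N\in\mathfrak P$, and maximality in $\mathfrak P$ forces $\mathcal N=\mathcal M$.

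The only point needing care is the singleton case, i.e.\ making sure the generated additive set stays inside $\Lambda^{\le k}$; this is the homogeneity remark above. If one prefers the convention that "additive subset" only requires closure under addition, the set $\{mA;\ m\ge1\}$ already suffices. No characteristic assumption is needed here.
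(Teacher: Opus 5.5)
Your proof is correct and follows the paper's approach: the paper's entire proof is ``Apply Zorn's lemma,'' and you supply the details, namely that unions of chains are additive and that maximality within $\mathfrak P$ gives maximality among all additive subsets of $\Lambda^{\le k}$. Your homogeneity check also correctly justifies the ``in particular'' clause, which the paper leaves implicit: the additive set generated by a single $A\in\Lambda^{\le k}$ stays in $\Lambda^{\le k}$ because each $s\times s$ per-minor of $mA$ is $m^s$ times the corresponding per-minor of $A$.
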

    \begin{proof}
    Apply Zorn's lemma.
    \end{proof}
    \section{Permanent rank one preservers}
  In this section we prove Theorem~\ref{th:prk1_main} and Theorem~\ref{th:prk1_maina}; without further notice, $\FF$ will be a field with characteristic different from two.
We start by giving a complete description (up to standard maps) of matrices of permanent 
rank one. To this end, we rely on the following result obtained by Guterman and 
Spiridonov~\cite{Guterman2020}.
    \begin{lemma} \cite[Lemma 2.3]{Guterman2020}
    \label{lemma:prk1_structure}
    Let $A \in \M{n}$. Then $A \in \Lambda^1$ if and only if at least one of the following three conditions holds.
    \begin{enumerate}
        \item $A$ has exactly one nonzero row.
        \item $A$ has exactly one nonzero column.
        \item After a  permutation of rows and columns, $A$ has the form:
        \[
        \begin{pmatrix} a_{11} & a_{12} & 0 & \cdots & 0 \\ a_{21} & a_{22} & 0 & \cdots & 0 \\ 0 & 0 & 0 & \cdots & 0 \\ \vdots & \vdots & \vdots & \ddots & \vdots \\ 0 & 0 & 0 & \cdots & 0 \end{pmatrix}, \; \text{where} \;\; a_{11} a_{22} + a_{12} a_{21} = 0,
        \]
        and not all elements $a_{11},a_{12},a_{21},a_{22}$   are zero.
    \end{enumerate}
    \end{lemma}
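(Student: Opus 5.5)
The statement to prove is Lemma~\ref{lemma:prk1_structure}. I would argue directly from the definition: $\prk(A)=1$ means that $A\neq 0$ and every $2\times 2$ submatrix has zero permanent, i.e. $a_{is}a_{jt}+a_{it}a_{js}=0$ for all $i<j$, $s<t$. (If all $2\times2$ per-minors vanish, then every larger square submatrix also has zero permanent: expand the permanent along two rows as a sum of products of $2\times 2$ per-minors with complementary permanents.)

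\emph{Sufficiency.} In cases 1 and 2 every $2\times2$ submatrix has a zero row or column, so its permanent is zero; as $A\ne0$, $\prk(A)=1$. In case 3 the only $2\times2$ submatrix not containing a zero line is the top-left block, whose permanent is $a_{11}a_{22}+a_{12}a_{21}=0$ by assumption.

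\emph{Necessity.} Take $A\in\Lambda^1$ and suppose that $A$ has at least two nonzero rows and at least two nonzero columns. The key step is to show that the support of $A$ lies in at most two rows and at most two columns. Suppose instead that $A$ has three nonzero rows $i,j,k$.

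First I would show that two nonzero entries in different rows and different columns force a rigid $2\times2$ pattern. Let $a_{is}\neq0$ and $a_{jt}\neq0$ with $i\neq j$, $s\neq t$. The condition $a_{is}a_{jt}=-a_{it}a_{js}$ then forces $a_{it}\ne0$ and $a_{js}\ne0$. So the $2\times2$ block on rows $\{i,j\}$ and columns $\{s,t\}$ is fully nonzero.

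Next I would derive the contradiction, which is the main obstacle. Since each pair of nonzero rows yields a fully nonzero $2\times2$ block, we can choose columns $s\neq t$ such that rows $i,j,k$ all have nonzero entries there. This requires a short case analysis on supports. Once this is arranged, consider the $3\times2$ submatrix on rows $i,j,k$ and columns $s,t$. Set $r_\ell=a_{\ell s}/a_{\ell t}$ for $\ell\in\{i,j,k\}$. The vanishing per-minors give $r_i=-r_j$, $r_j=-r_k$ and $r_i=-r_k$. Hence $r_i=-r_i$, so $2r_i=0$ and $r_i=0$, since $\charac\F\neq2$. This contradicts $a_{is}\neq0$.

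For the case analysis I would argue as follows. If row $i$ has a nonzero entry in column $s$ and row $j$ has one in some other column $t$, then the $2\times2$ claim makes both $a_{it},a_{js}$ nonzero. For row $k$, its nonzero entry lies in some column $u$. If $u\notin\{s,t\}$, apply the $2\times2$ claim to the pairs $(i,k)$ and $(j,k)$: this fills column $u$ in rows $i,j$ and fills columns $s,t$ in row $k$, so the three rows are nonzero on columns $s,t$. If $u\in\{s,t\}$, apply the claim to $k$ and whichever of $i,j$ gives distinct columns; this fills the other of $s,t$ in row $k$. The degenerate subcase is when all nonzero entries of the three rows lie in a single column. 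But $A$ has two nonzero columns, so we can always arrange distinct columns.

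Once the three-row case is excluded, the transposed argument shows there are at most two nonzero columns. The support then lies in a $2\times2$ block, and a permutation of rows and columns gives the form in case 3.
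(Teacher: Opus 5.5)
The paper does not prove this statement; it is quoted without proof as Lemma~2.3 of Guterman and Spiridonov. There is therefore no in-paper argument to compare with, and your proposal has to stand on its own.

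It does stand: the proof is correct. Each ingredient checks out:
\begin{itemize}
\item Reducing $\prk(A)=1$ to ``$A\neq0$ and all $2\times2$ per-minors vanish'' via the two-row Laplace expansion of the permanent is valid.
\item The observation that $a_{is}a_{jt}\neq0$ with $i\neq j$, $s\neq t$ forces $a_{it}a_{js}\neq0$ is right.
\item The $3\times2$ computation $r_i=-r_j=r_k=-r_i$ is exactly where $\charac\F\neq2$ enters. In characteristic two the all-ones matrix $J_3$ would be a counterexample.
\end{itemize}
Your necessity argument also establishes the fact the paper uses repeatedly, e.g.\ in the proofs of Lemmas~\ref{lem:46} and~\ref{lem:line-square}: a matrix with at least three nonzero columns and at least two nonzero rows (or vice versa) cannot have per-rank one.

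The one loose sentence is your treatment of the degenerate subcase. ``We can always arrange distinct columns'' should be made explicit, as follows. Suppose all nonzero entries of rows $i,j,k$ lie in a single column $s$. Pick a nonzero entry $a_{\ell t}$ with $t\neq s$; it exists because $A$ has two nonzero columns. Then $\ell\notin\{i,j,k\}$. Your $2\times2$ observation applied to $a_{is}$ and $a_{\ell t}$ gives $a_{it}\neq0$, which is a contradiction. So this subcase never occurs, and the rest of your case analysis goes through as written.
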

    From this lemma, one can derive the following classification:
    \begin{lemma}\label{lem-structure-of-prk1}
    Let $A \in \M{n}$ be a matrix of permanent rank one. Then, up to standard maps, $A$ 
takes one of the following forms:
    $$T_0 = \begin{pmatrix} 1 & 1\\
    1 & -1
           \end{pmatrix}\oplus 0_{n-2};\quad T_1=E_{11},\quad 
           \dots,\quad T_n=E_{11}+E_{12}+\dots+E_{1n}.$$
    \end{lemma}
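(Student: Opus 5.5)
We derive Lemma~\ref{lem-structure-of-prk1} directly from Lemma~\ref{lemma:prk1_structure}, splitting into its three cases. The standard maps available to us are row and column permutations, nonzero row and column rescalings, and transposition.

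\emph{Cases 1 and 2 (a single nonzero line).} By transposition, Case~2 reduces to Case~1, so assume $A$ has exactly one nonzero row, say row $i$, with exactly $k\ge1$ nonzero entries.
\begin{itemize}
\item A row permutation moves this row to the first position.
\item A column permutation moves its $k$ nonzero entries to columns $1,\dots,k$.
\item Column rescaling $A\mapsto AD$, with $D=\diag(a_{1\,1}^{-1},\dots,a_{1\,k}^{-1},1,\dots,1)$, makes each of these entries equal to $1$.
\end{itemize}
The result is $T_k=E_{11}+\dots+E_{1k}$.

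\emph{Case 3 (the $2\times2$ block).} After permuting rows and columns, $A=B\oplus 0_{n-2}$ with $B=\left(\begin{smallmatrix} a_{11}&a_{12}\\ a_{21}&a_{22}\end{smallmatrix}\right)\neq0$ and $a_{11}a_{22}+a_{12}a_{21}=0$.

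First suppose some entry of $B$ is zero, say $a_{11}=0$. Then $a_{12}a_{21}=0$.
\begin{itemize}
\item If $a_{12}=0$, the only nonzero entries lie in column $1$ or in row $2$. Either way $A$ has exactly one nonzero line: for instance, if $a_{21}=0$ as well then $A$ is supported in row $2$, and if $a_{22}=0$ then $A$ is supported in column $1$.
\item If $a_{21}=0$ instead, the same argument shows $A$ is supported in row $1$ or in column $2$.
\end{itemize}
In every such subcase we fall back to Cases 1--2 and obtain some $T_k$. The case where another entry of $B$ vanishes is symmetric.

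Now suppose all four entries of $B$ are nonzero.
\begin{itemize}
\item Rescale row $1$ by $a_{11}^{-1}$ and row $2$ by $a_{21}^{-1}$, so that the first column becomes $(1,1)^T$.
\item Rescale column $2$ so that the $(1,2)$ entry becomes $1$.
\end{itemize}
Rescaling rows and columns multiplies the permanent of $B$ by a nonzero scalar, so the condition $\per B=0$ is preserved. The block is now $\left(\begin{smallmatrix}1&1\\1&x\end{smallmatrix}\right)$, and $\per=x+1=0$ forces $x=-1$. This is exactly $T_0$.

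The only place needing care is the degenerate subcases of Case~3, where we must check that a block with a zero entry really collapses to a single nonzero line. This is a short enumeration over which entries vanish, so no real obstacle is expected.
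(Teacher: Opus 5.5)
Your proof is correct and takes the paper's approach: the paper only remarks that this classification follows from Lemma~\ref{lemma:prk1_structure}, and your case analysis supplies that derivation. You normalize a single nonzero line to $T_k$ using permutations and rescalings; a $2\times 2$ block with $\per B=0$ either collapses to a single line or, when all four entries are nonzero, normalizes to $T_0$. One cosmetic point: when $a_{11}=a_{12}=0$ the whole first row of $B$ vanishes, so $A$ is supported in row $2$ outright, and your further (non-exhaustive) split on $a_{21}=0$ or $a_{22}=0$ is unnecessary.
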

      \begin{remark}Note that  a matrix $T_i$
 cannot be brought to the form $T_j$
 via standard maps, whenever $i \neq j$.
 This follows from the fact that the pair $\{ r(A), c(A) \}$, where $r(A)$ and $c(A)$ denote the number of nonzero rows and columns of $A$, is invariant under standard maps.
    \end{remark}
     We will henceforth say $A$ \emph{is of type $T_i$} if $\mathrm{St}(A)=T_i$ for some (composition of) standard maps~$\mathrm{St}$.
 
    Next, we characterize maximal additive subsets of $\Lambda^{\leq 1}$. The following lemma from \cite{Guterman2020} will be useful.
    \begin{lemma} \cite[Lemma 3.4]{Guterman2020}\label{lem:Guterman2020}
    Consider an additive subset $\mathcal{A} \subset \Lambda^{\leq 1}$. Then one of the following holds:
    \begin{enumerate}
        \item $\mathcal{A} \subseteq \R_i$ for some $i$.
        \item $\mathcal{A} \subseteq \C_j$ for some $j$.
        \item $\mathcal{A} \subset (\R_{i}+\R_j)\cap (\C_s+\C_t)$ for some $i,j,s,t$.
    \end{enumerate}
    \end{lemma}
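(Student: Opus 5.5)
The statement to prove is Lemma~\ref{lem:Guterman2020}: an additive set $\mathcal{A}\subset\Lambda^{\le1}$ lies in one row, in one column, or in a $2\times2$ block $(\R_i+\R_j)\cap(\C_s+\C_t)$. The plan is a case analysis on the types of the nonzero members of $\mathcal{A}$, using Lemma~\ref{lemma:prk1_structure} and Lemma~\ref{lem-structure-of-prk1}. The basic tool is that $\mathcal{A}$ is closed under addition, so sums such as $A+B$, $A+2B$ and $A-B=A+(p-1)B$ (in positive characteristic) again have per-rank at most one. We test these sums against Lemma~\ref{lemma:prk1_structure} to force the supports of $A$ and $B$ to be compatible.

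\textbf{Step 1 (one nondegenerate member).} Suppose some $A\in\mathcal{A}$ meets at least two rows and at least two columns. By Lemma~\ref{lemma:prk1_structure}, its support then lies in a $2\times2$ block, say rows $i,j$ and columns $s,t$, and $A$ has nonzero entries in both rows and both columns.

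Take any $B\in\mathcal{A}$ with a nonzero entry $b_{kl}$ outside this block, say $k\notin\{i,j\}$. Pick a nonzero entry of $A$ in a column $\neq l$; this is possible since $A$ has two nonzero columns. Take an entry of $A$ in row $i$ or $j$ at such a column $c\neq l$.

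Now look at $A+mB$ for $m$ in the prime field. The $2\times2$ per-minor on rows $\{i\text{ or }j,k\}$ and columns $\{c,l\}$ is a polynomial in $m$ of degree at most $2$. Its linear coefficient contains $a_{\cdot c}b_{kl}\neq0$, corrected by terms coming from $B$ inside the block. One must handle carefully the possibility that $B$ also has entries in the block.

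A cleaner route is to use supports directly. Every element $A+B$ has support in at most two rows and two columns (or in one line). Additive closure under integer multiples also gives $A+mB$ for all $m$. In characteristic $p$ at least $3$ there are at least $3$ values of $m$. For all but at most one value of $m$, no cancellation occurs between $A$ and $mB$, so the support of $A+mB$ contains $\operatorname{supp}A\cup\operatorname{supp}B$.

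That union has at least $3$ rows, or at least $3$ columns together with two rows. This forces $A+mB$ to be of type one-row or one-column, which contradicts $A$ already spanning two rows and two columns. Hence all of $\mathcal{A}$ lies in rows $i,j$ and columns $s,t$, giving case 3.

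\textbf{Step 2 (all members lie in lines).} Now every nonzero member lies in a single row or a single column.

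If two members $A$ and $B$ are not both inside a common row or a common column, again choose $m$ avoiding cancellation. Then $A+mB$ has support containing both supports. The union therefore occupies at least two rows and at least two columns, so by Lemma~\ref{lemma:prk1_structure} it lies in a $2\times2$ block.

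This reduces to Step 1 with $A+mB$ as the nondegenerate member, unless $A+mB$ is itself a line. That cannot happen, because its support has two rows and two columns. So $\mathcal{A}$ is contained in a block.

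Otherwise every pair of members shares a row or shares a column. A short combinatorial check then puts all of them in one common row or one common column, or again into a block. The check goes as follows: if $A\subseteq\R_i$ and $B\subseteq\C_l$ with neither contained in $\R_i\cap\C_l$, apply the sum argument above.

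\textbf{Main obstacle.} The delicate point is the no-cancellation claim. For a fixed position, $a+mb=0$ holds for at most one $m$. Over the relevant finite set of positions we need a multiplier $m\in\{1,2,\dots\}$ that avoids all bad values simultaneously. There are at most $4$ relevant positions, so at most $4$ bad values of $m$.

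This may fail in characteristic $3$ or $5$. In that case I would instead choose a single position per line and use the per-minor polynomial argument directly, with degree at most $2$ in $m$. Lemma~\ref{lemma:Alon} then needs only $3$ values of $m$, which is fine since $\charac\FF\neq2$. This handles the small-characteristic cases.
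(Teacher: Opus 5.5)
The paper gives no proof of this lemma; it quotes it from Guterman--Spiridonov, so your argument has to stand on its own. Step~2 and the closing combinatorial check are fine. Two line-type members not lying in a common line overlap in at most one cell, so one nonzero multiplier avoids cancellation even in characteristic $3$. And if every pair of nonzero members lies in a common line, then all of them do.

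\textbf{The gap: Step~1 in characteristic $3$ and $5$.} The statement must cover these characteristics. There the multipliers are $m\in\FF_p$, and $m=0$ must be discarded because it kills $b_{kl}$. That leaves only $p-1\in\{2,4\}$ values against your bound of four cancellation values. (The sentence ``for all but at most one value of $m$, no cancellation occurs'' is also false as written.)

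Your fallback does not close this. Lemma~\ref{lemma:Alon} only gives that $g(m)=\per\bigl((A+mB)[I|J]\bigr)$ vanishes identically. That is a contradiction only once you exhibit a nonzero coefficient, and you never do. For rows $\{i,k\}$ and columns $\{c,l\}$ the linear coefficient is $a_{ic}b_{kl}+a_{il}b_{kc}$. The correction term comes from $b_{kc}$ in row $k$, not from entries of $B$ inside the block. When $l\in\{s,t\}$ this coefficient can vanish for a given row, so ``a single position per line'' is not enough.

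\textbf{The missing idea.} Use that $B$ itself lies in $\Lambda^{\le1}$. Combine this with the fact that a nondegenerate per-rank-one $2\times2$ block has all four entries nonzero, which follows from $a_{is}a_{jt}+a_{it}a_{js}=0$.

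Since $b_{kl}\neq0$ with $k\notin\{i,j\}$, $B$ lies in $\R_k$, in $\C_l$, or in a nondegenerate block on rows $\{k,k'\}$ and columns $\{l,l'\}$. In every case $B$ meets the block of $A$ inside one row or one column of it. So for $m\neq0$, a whole row or column of $A$ and the cell $(k,l)$ survive in $A+mB$.

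One then checks that $A+mB\notin\Lambda^{\le1}$ unless $k'\in\{i,j\}$ (say $k'=i$), $\{l,l'\}=\{s,t\}$, and row $i$ of $A+mB$ vanishes. That last condition holds for at most one $m$. Since $\charac\FF\neq2$, the multipliers $m=1,2$ are distinct and nonzero, so one of them gives the contradiction.

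Equivalently, in your polynomial route take $l=s$ and $c=t$. The linear coefficients for rows $i$ and $j$ give
\[
\begin{pmatrix}a_{it}&a_{is}\\ a_{jt}&a_{js}\end{pmatrix}\begin{pmatrix}b_{ks}\\ b_{kt}\end{pmatrix}=0,\qquad a_{it}a_{js}-a_{is}a_{jt}=-2a_{is}a_{jt}\neq0,
\]
which forces $b_{ks}=0$, a contradiction. Either way, Step~1 then holds in every characteristic other than $2$.
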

    Using these results, it can be verified with a straightforward, though somewhat tedious, calculation that:
    \begin{lemma}
    \label{lemma:max_subsets}
    Let $\mathcal{A}$ be a maximal additive subset of $\Lambda^{\leq 1}$.
    \begin{itemize}
        \item[(i)] If $T_0$ belongs to $\mathcal{A}$, then
        \begin{align*}
\mathcal{A} &= \left\{ \left(\begin{smallmatrix} \alpha & \alpha \\ \beta & -\beta 
\end{smallmatrix}\right)\oplus 0_{n-2}\mid \alpha, \beta \in \F \right\} = \S^{12}_{12} \\
&\text{or} \\
\mathcal{A} &= \left\{ \left(\begin{smallmatrix} \alpha & \beta \\ \alpha & -\beta 
\end{smallmatrix}\right)\oplus 0_{n-2}\mid \alpha, \beta \in \F \right\} = \dotS^{12}_{12}.
\end{align*}
        \item[(ii)] If $T_1$ belongs to $\mathcal{A}$, then $\mathcal{A} = \R_1$ or $\mathcal{A} = \C_1$.
        \item[(iii)] If $T_2$ belongs to $\mathcal{A}$, then $\mathcal{A} = \R_1$ or $\mathcal{A} =\S^{1i}_{12}$
        for some $i \neq 1$.
        \item[(iv)] If $T_j$, with $j > 2$, belongs to $\mathcal{A}$, then $\mathcal{A} = \R_1$.
    \end{itemize}
   \end{lemma}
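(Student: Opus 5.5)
The plan is to combine Lemma~\ref{lem:Guterman2020} with the explicit forms of $T_0,\dots,T_n$ from Lemma~\ref{lem-structure-of-prk1}. Maximality is invariant under the standard maps, since these are additive bijections preserving $\Lambda^{\le1}$. So in each case we may assume the given $T_i$ lies in $\mathcal A$ and only need to find which additive sets from the three alternatives of Lemma~\ref{lem:Guterman2020} contain $T_i$ and are maximal.

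\emph{Step 1: the candidates.} By Lemma~\ref{lem:Guterman2020}, $\mathcal A\subseteq \R_i$, or $\mathcal A\subseteq\C_j$, or $\mathcal A$ lies in a $2\times2$ block $(\R_i+\R_j)\cap(\C_s+\C_t)$. Every line is additive and lies in $\Lambda^{\le1}$. In the block case, $\mathcal A$ is an additive subset of $\Lambda^{\le 1}$ inside a copy of $\M{2}$, so every element has zero permanent there. The same holds for all sums $A+B$ with $A,B\in\mathcal A$, hence $\per(A+B)-\per A-\per B=0$, where this expression is the symmetric bilinear form
$$\langle A,B\rangle=a_{11}b_{22}+a_{22}b_{11}+a_{12}b_{21}+a_{21}b_{12}.$$
Since $\charac\FF\neq2$, the additive group generated by $\mathcal A$ spans a totally isotropic subspace for the quadratic form $q(A)=a_{11}a_{22}+a_{12}a_{21}$ on $\FF^4$. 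This form is hyperbolic, of Witt index $2$, so its maximal totally isotropic subspaces are $2$-dimensional. I will enumerate them explicitly. Those of the form $\{a_{21}=a_{22}=0\}$ and similar ones are rows or columns restricted to the block, which are contained in a full line. The remaining ones are $\S$ and $\dotS$ of the block, together with their sign-variants, which arise from the standard maps.

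\emph{Step 2: the cases.} For (i), $T_0$ has two nonzero rows and two nonzero columns, so it lies in no line. Hence $\mathcal A$ is in the block $\{1,2\}\times\{1,2\}$ and is totally isotropic. The isotropic vector $(1,1,1,-1)$ has orthogonal complement of dimension $3$, containing exactly two maximal isotropic planes, and these are $\S^{12}_{12}$ and $\dotS^{12}_{12}$. Maximality then gives equality. For (ii), $E_{11}$ lies in $\R_1$ and in $\C_1$, and in any block containing it. Isotropic planes through $e_{11}$ inside a block are the row or column pieces, which are not maximal because they extend to a full line. A full square contains no weighted matrix unit, so $\mathcal A$ is $\R_1$ or $\C_1$. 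For (iii) and (iv), $T_j$ has $j$ nonzero entries in row $1$, so it lies in no column. For $j>2$ it lies in no $2\times2$ block either, which forces $\mathcal A=\R_1$. For $j=2$ the block must use columns $1,2$ and rows $1,i$. The isotropic planes through $E_{11}+E_{12}$ are the row-$1$ piece, which extends to $\R_1$, and $\{\alpha(E_{11}+E_{12})+\beta(E_{i1}-E_{i2})\}=\S^{1i}_{12}$. When $i>1$ we rename if necessary so that the indices fit the paper's convention.

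\emph{Step 3: maximality checks.} It remains to confirm that $\R_1$, $\C_1$, $\S^{12}_{12}$, $\dotS^{12}_{12}$ and $\S^{1i}_{12}$ are indeed maximal. Any additive $\mathcal B\supsetneq\mathcal A$ must itself satisfy Lemma~\ref{lem:Guterman2020}. A full square is not contained in a line, and inside its own block it is already a maximal isotropic subspace. A line is not contained in a $2\times2$ block when $n\ge3$, and it is not contained in a different line.

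The main obstacle is the block case. There we pass from "additive subset" to "subspace": we take the $\FF$-span and use the bilinear form, which needs $\charac\neq2$, instead of Corollary~\ref{cor:max}, which has a stronger characteristic hypothesis. We also need the small enumeration of isotropic planes of the split $4$-dimensional form through a given isotropic vector.
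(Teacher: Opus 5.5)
Your proof is correct. It shares the paper's skeleton: the paper only says the lemma follows from Lemma~\ref{lem-structure-of-prk1} and Lemma~\ref{lem:Guterman2020} by a ``straightforward, though somewhat tedious, calculation.'' Your reduction to the line case and the $2\times2$-block case is the same. The difference is that you handle the block case conceptually rather than computationally.

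Instead of solving by hand for the entries of a general element of $\mathcal A$, you use three facts:
\begin{itemize}
\item The $2\times2$ permanent is a quadratic form whose polar form vanishes on $\mathcal A$, because $\mathcal A$ is closed under addition. So the $\FF$-span of $\mathcal A$ is totally singular and still lies in $\Lambda^{\le1}$.
\item By maximality, $\mathcal A$ is therefore a maximal isotropic plane of the block.
\item The classification reduces to computing $v^\perp/\FF v$, a hyperbolic plane with exactly two isotropic lines, for $v=T_0$, $E_{11}$, $E_{11}+E_{12}$.
\end{itemize}

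This buys two things. First, the passage from ``additive'' to ``subspace'' works in every characteristic $\neq2$, whereas Corollary~\ref{cor:max} would exclude characteristic $3$. In fact the identity $q(\sum\lambda_kA_k)=\sum\lambda_k^2q(A_k)+\sum_{k<l}\lambda_k\lambda_l\langle A_k,A_l\rangle$ does not even need characteristic $\neq2$. Second, the three cases (i)--(iii) become one uniform computation instead of separate ad hoc checks.

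One inaccuracy in Step~1 does no harm. The maximal isotropic planes of a block are not just $\S$, $\dotS$ and their sign variants. They form two one-parameter families: $\{\left(\begin{smallmatrix}\alpha&\beta\\ -t\alpha&t\beta\end{smallmatrix}\right)\}$, containing the rows ($t=0$, and row $2$ as the limit) and $\dotS^{12}_{12}$ ($t=-1$); and $\{\left(\begin{smallmatrix}\alpha&s\alpha\\ \beta&-s\beta\end{smallmatrix}\right)\}$, containing the columns and $\S^{12}_{12}$ ($s=1$). In other words, all diagonal rescalings occur. Your Step~2 uses only the count of planes through a fixed isotropic vector, so the proof is unaffected.

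Step~3 proves more than the statement asks, since the lemma only asserts necessary conditions, but it is correct.
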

      Suppose that $\Phi \colon \M{n} \to \M{n}$ is an additive map satisfying
    \[
    \Phi(\Lambda^1) \subseteq \Lambda^1.
    \]
    By Lemma~\ref{lemma:max_subsets} 
    the image of any line is either contained in a line or contained in a full square. We therefore distinguish two cases:
    \begin{enumerate}
        \item every line is mapped into a line;
        \item at least one line is mapped into a  square which is not contained in a line.
    \end{enumerate}
    We begin with the second case. Using standard maps in the domain and the codomain, we may assume without     loss of generality that
    \[
    \Phi(\R_1) \subseteq \S^{12}_{12}.
    \]
    In the following Lemmas \ref{lem:46}--\ref{lem:TODO}, it is implicitly assumed that $\Phi(\R_1)$ is not contained in any line.
    \begin{lemma}\label{lem:46}
    Let $\Phi \colon \M{n} \to \M{n}$ be an additive map such that $\Phi(\Lambda^1) \subseteq \Lambda^1$. Suppose that $\Phi(\R_1) \subseteq \S^{12}_{12}$. Then
    \[
  \text{for all } A \in \M{n} \quad  \text{ if }  i\ge3  \text{ and } j \ge3  \text{ then } (\Phi(A))_{ij} = 0.
    \]
    \end{lemma}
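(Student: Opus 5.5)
The plan is to reduce to weighted matrix units and push each one into an additive family of per-rank-at-most-one matrices whose image is forced to contain a fixed nonzero element of $\S^{12}_{12}$.

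\emph{Reduction.} By additivity it suffices to show that $(\Phi(\alpha E_{kl}))_{ij}=0$ for all $i,j\ge3$, all $k,l\in[n]$ and all $\alpha\in\FF$. For $k=1$ this is immediate, since $\Phi(\R_1)\subseteq\S^{12}_{12}$. So fix $k\ge2$.

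\emph{Choice of a companion column.} Since $\Phi(\R_1)$ is not contained in a line, it is nonzero. Hence there is an index $m$ and a scalar $\gamma$ with $N:=\Phi(\gamma E_{1m})\neq0$ and $N\in\S^{12}_{12}$. A nonzero element of $\S^{12}_{12}$ is of one of three kinds:
\begin{itemize}
\item $a(E_{11}+E_{12})$, which is of type $T_2$ sitting in row $1$;
\item $b(E_{21}-E_{22})$, which is of type $T_2$ sitting in row $2$;
\item an element with $a,b\neq0$, which is of type $T_0$ in rows $1,2$ and columns $1,2$.
\end{itemize}
Transporting Lemma~\ref{lemma:max_subsets} through the standard maps, every maximal additive subset of $\Lambda^{\le1}$ containing $N$ is one of the following:
\begin{itemize}
\item $\R_1$ or some $\S^{1i}_{12}$, in the first case;
\item $\R_2$ or some $\S^{2i}_{12}$ (up to the sign convention in the second row), in the second case;
\item $\S^{12}_{12}$ or $\dotS^{12}_{12}$, in the third case.
\end{itemize}
In every case each matrix of that set is supported either in rows $1,2$ or in columns $1,2$. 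In particular, all its entries $(i,j)$ with $i,j\ge3$ vanish.

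\emph{Main step.} Next we embed $\alpha E_{kl}$ together with $\gamma E_{1m}$ into an additive subset of $\Lambda^{\le1}$.
\begin{itemize}
\item If $l=m$, take the column $\C_m$. It lies in $\Lambda^{\le1}$, is additive, and contains both $\gamma E_{1m}$ and $\alpha E_{km}$.
\item If $l\neq m$, take the square $\mathcal Q:=\{x(E_{1m}+E_{1l})+y(E_{km}-E_{kl});\ x,y\in\FF\}$, a copy of $\S^{1k}_{ml}$ up to ordering of columns. The only thing to check is that $\mathcal Q$ has the right shape to lie in $\Lambda^{\le1}$.
\end{itemize}
In the second case, however, $\mathcal Q$ does not contain $\alpha E_{kl}$ by itself. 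So instead I would use the column $\C_l$ when $\Phi(\FF E_{1l})\neq0$, and the square $\mathcal Q$ when $\Phi(\FF E_{1l})=0$.

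In the latter case, $\Phi(\gamma E_{1m}+\gamma E_{1l})=N$ and $\Phi(\alpha E_{km}-\alpha E_{kl})=\Phi(\alpha E_{km})-\Phi(\alpha E_{kl})$. Moreover $\Phi(\alpha E_{km})$ is already controlled by the column-$m$ case. So it suffices to control $\Phi(\mathcal Q)$, taking $y=\alpha$ and $x=\gamma$ suitably; note that $\mathcal Q$ is closed under these specific additive combinations.

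In either case $\Phi$ of the chosen additive set is an additive subset of $\Lambda^{\le1}$ containing $N$ (after replacing $m$ by $l$ in the column case). By Lemma~\ref{lem:existence} it lies in a maximal additive subset containing $N$, which by the previous paragraph kills all entries $(i,j)$ with $i,j\ge3$. This yields the claim for $\Phi(\alpha E_{kl})$.

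\emph{Expected obstacle.} The delicate point is the bookkeeping in the case $\Phi(\FF E_{1l})=0$. There one must make sure the auxiliary square genuinely consists of per-rank-at-most-one matrices and contains an element mapping onto a nonzero element of $\S^{12}_{12}$. One must also check that the classification in Lemma~\ref{lemma:max_subsets} of maximal sets containing a type-$T_2$ element of row $1$ or row $2$ (with the sign pattern $E_{21}-E_{22}$) really only produces sets supported in rows $\{1,2\}$ or in columns $\{1,2\}$.
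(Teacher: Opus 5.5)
Your proof is correct and is essentially the paper's argument. You put $\alpha E_{kl}$ in the column $\C_l$ together with a row-one weighted matrix unit whose image is a nonzero element of $\S^{12}_{12}$. You then use that no maximal additive subset of $\Lambda^{\le1}$ containing such an element has nonzero entries at positions $(i,j)$ with $i,j\ge3$; the paper phrases the same obstruction, after normalizing by standard maps, as ``three nonzero columns and two nonzero rows''.

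The case $\Phi(\FF E_{1l})=0$ and the auxiliary square $\mathcal Q$ are unnecessary. For every $\gamma\neq0$ you have $\gamma E_{1l}\in\Lambda^1$, so $\Phi(\gamma E_{1l})\in\Lambda^1$ is automatically nonzero. Your treatment of that vacuous case is nonetheless valid.
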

    \begin{proof}
    Assume, towards a contradiction, that there exist $A \in \M{n}$ and indices $i_0,j_0 \ge 3$ such that $(\Phi(A))_{i_0 j_0} \neq 0$. Since $\Phi$ is additive, there exists a weighted matrix unit $x E_{st}$ satisfying
    \[
    (\Phi(x E_{st}))_{i_0 j_0} \neq 0.
    \]
    Necessarily $s > 1$, because $\Phi(\R_1) \subseteq \S^{12}_{12}$. \\
    Applying standard maps, we may assume that
    \[
    (\Phi(E_{21}))_{33} \neq 0.
    \]
    Consider now the images of $E_{11}$ and $E_{21}$. These matrices belong to $\C_1$, so their images are contained in some maximal additive subset of~$\Lambda^{\le1}$. However, $\Phi(E_{11})$ and $\Phi(E_{21})$ together have at least three nonzero columns (the first, second, and third) and at least two nonzero rows (either the first and third or the second and third), which is impossible for maximal additive subsets of~$\Lambda^{\le1}$. This contradiction proves the claim.
    \end{proof}
    \begin{lemma}\label{lem:line-square}
   Under the assumptions of Lemma~\ref{lem:46} we actually have
   $\Phi(\M{n}) \subseteq \C_1  +  \C_2$.
    \end{lemma}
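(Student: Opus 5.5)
By Lemma~\ref{lem:46}, every entry of $\Phi(A)$ in position $(i,j)$ with $i,j\ge3$ vanishes. So it remains to show that the entries in rows $1,2$ and columns $j\ge3$ vanish as well. Since $\Phi$ is additive, it is enough to prove that $\Phi(yE_{st})\in\C_1+\C_2$ for every weighted matrix unit $yE_{st}$. For $s=1$ this holds because $\Phi(\R_1)\subseteq\S^{12}_{12}$. So fix $s\ge2$. The plan is to place $yE_{st}$ in an additive subset of $\Lambda^{\le1}$ together with suitable elements of $\R_1$. The image of that subset is again an additive subset of $\Lambda^{\le1}$, so Lemma~\ref{lem:Guterman2020} applies to it. The point is that the images of elements of $\R_1$ already occupy both columns $1$ and $2$, and this forces the whole image into those two columns.

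\emph{Main case: $\Phi(\FF E_{1t})\neq0$.} Every nonzero element of $\S^{12}_{12}$ has nonzero entries in both columns $1$ and $2$. Hence any nonzero $\Phi(xE_{1t})$ has two nonzero columns. Now $\Phi(\C_t)$ is an additive subset of $\Lambda^{\le1}$, so by Lemma~\ref{lem:Guterman2020} it is contained in a single row, a single column, or a block $(\R_i+\R_{i'})\cap(\C_j+\C_{j'})$. A single column is impossible, since $\Phi(xE_{1t})$ has two nonzero columns. In the block case the two columns must be $\{1,2\}$, so $\Phi(yE_{st})\in\C_1+\C_2$ and we are done.

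\emph{Row case within the main case.} If instead $\Phi(\C_t)\subseteq\R_i$, then $i\in\{1,2\}$, because $\Phi(xE_{1t})\neq0$ lies in rows $1,2$. Suppose, towards a contradiction, that $\Phi(yE_{st})$ has a nonzero entry in some column $j\ge3$ of row $i$. The hypothesis that $\Phi(\R_1)$ is not contained in any line gives a matrix $X\in\R_1$ whose image has both rows $1$ and $2$ nonzero, i.e.\ $\Phi(X)$ has all four entries of the block $\{1,2\}\times\{1,2\}$ nonzero.
\begin{itemize}
\item First I would try to choose $X$ inside a single cell $\FF E_{1u}$ with $u\ne t$. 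Then the per-rank-one matrix $xE_{1u}+xE_{1t}$ from $\R_1$, together with $yE_{st}$ or its full-square partner, should yield an additive set whose image meets rows $1,2$, columns $1,2$ and column $j$.
\item That contradicts Lemma~\ref{lem:Guterman2020}, which allows at most two rows and two columns unless the set lies in a line.
\end{itemize}

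\emph{Degenerate case: $\Phi(\FF E_{1t})=0$.} Here column $t$ gives no information, so I would move to another column. Pick $u\neq t$ and $a$ with $\Phi(aE_{1u})\ne0$; such $u$ exists since $\Phi(\R_1)\neq0$. Consider the full square of matrices
$$aE_{1u}+aE_{1t}+cE_{su}-cE_{st},\qquad a,c\in\FF,$$
on rows $1,s$ and columns $u,t$. All of these have per-rank at most one. Its image is an additive subset of $\Lambda^{\le1}$, and since $\Phi(aE_{1t})=0$ this image contains $\Phi(aE_{1u})$, which again spans columns $1,2$. Moreover, $\Phi(cE_{su})\in\C_1+\C_2$ by the main case applied to column $u$. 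Hence, whenever the image of this square is confined to columns $\{1,2\}$, so is $\Phi(cE_{st})$.

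\emph{Expected main obstacle.} The hard part is the possibility, in both the main and the degenerate case, that the relevant image lies inside a single row $\R_1$ or $\R_2$. Lemma~\ref{lem:46} does not exclude nonzero entries in rows $1,2$ and columns $\ge3$, so this case has to be ruled out by hand. My first attempt would be to pair $yE_{st}$ with different elements of $\R_1$: one chosen so that its image is nonzero in row $1$, and another so that its image is nonzero in row $2$. I would use both the row $\R_s$ and the column $\C_t$ as ambient additive sets. The goal is to exhibit an additive subset of $\Lambda^{\le1}$ whose image has three nonzero columns and two nonzero rows, contradicting Lemma~\ref{lem:Guterman2020} exactly as in the proof of Lemma~\ref{lem:46}.
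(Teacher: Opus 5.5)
\paragraph{Verdict.} Your proof has a genuine gap: the case that carries the content of the lemma is left unproved.

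Your reduction to weighted matrix units $yE_{st}$ with $s\ge2$ is correct, and so is the block subcase of your main case. But an offending entry at $(i,j)$ with $i\in\{1,2\}$ and $j\ge3$ can only occur in what you call the row case. There you give only a tentative plan, and it does not go through as stated.

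\begin{itemize}
\item Your plan needs a single cell $xE_{1u}$ whose image has all four entries of the $2\times2$ block nonzero. The hypothesis that $\Phi(\R_1)$ lies in no line gives such an element only as a \emph{sum} of cell images. Each individual $\Phi(xE_{1u})$ may lie in $\R_1$ or in $\R_2$, and you never say what to do then.
\item The phrase ``together with $yE_{st}$ or its full-square partner'' leaves the term $\Phi(yE_{su})$ uncontrolled. A priori it could cancel the offending entry.
\item In the degenerate case you invoke ``the main case applied to column $u$'' to get $\Phi(cE_{su})\in\C_1+\C_2$. This is circular, since you proved it only in the block subcase.
\item The sentence ``whenever the image of this square is confined to columns $\{1,2\}$, so is $\Phi(cE_{st})$'' is a tautology. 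The whole point is to exclude that the image of the square lies in $\R_1$ or $\R_2$.
\end{itemize}

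\paragraph{The missing idea.} Pick the auxiliary cell according to the offending row, rather than demanding a full image. This is what the paper does: it normalizes the offending entry to position $(2,3)$ of $\Phi(E_{21})$ and chooses $xE_{1j}$ with $\alpha_1\ne0$. Concretely:
\begin{itemize}
\item Let $(\Phi(yE_{st}))_{ij}\ne0$ with $j\ge3$, so $i\in\{1,2\}$ by Lemma~\ref{lem:46}. Let $i'$ be the other index in $\{1,2\}$. Note that a nonzero row of an element of $\S^{12}_{12}$ is nonzero in both columns $1,2$.
\item First, $\Phi(\FF E_{1t})\subseteq\R_i$. Otherwise $\Phi(\C_t)$ would have nonzero entries in rows $i,i'$ and columns $1,2,j$, contradicting Lemma~\ref{lem:Guterman2020}.
\item Second, since $\Phi(\R_1)\not\subseteq\R_i$, some cell element $xE_{1u}$ has image nonzero in row $i'$. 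By the first step $u\ne t$. The same argument applied to $\Phi(\C_u)$ gives $(\Phi(yE_{su}))_{ij}=0$.
\item Finally, apply Lemma~\ref{lem:Guterman2020} to the $\Phi$-image of the full square $\{a(E_{1u}+E_{1t})+c(E_{su}-E_{st});\ a,c\in\FF\}$. It contains $\Phi(x(E_{1u}+E_{1t}))$, whose row $i'$ equals that of $\Phi(xE_{1u})$ because $\Phi(xE_{1t})\in\R_i$. It also contains $\Phi(y(E_{su}-E_{st}))$, whose $(i,j)$ entry is $-(\Phi(yE_{st}))_{ij}\ne0$. So the image meets two rows and three columns, a contradiction.
\end{itemize}

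This handles your main and degenerate cases at once; in the degenerate case the first step is trivial. Applying Lemma~\ref{lem:Guterman2020} to the whole image of the square also avoids the paper's need to test the two scalings $A_1,A_2$.
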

    \begin{proof}
    By the  Lemma~\ref{lem:46}, we already know that
    \[
    (\Phi(A))_{ij} = 0 \quad \text{for all } A \in \M{n} \text { and } i,j \ge3.
    \]
    It therefore suffices to show that the image of a weighted matrix unit cannot have a nonzero entry in position $(i,j)$ with $i \in \{1,2\}$ and $j \ge3$. 

    Assume the contrary. Using standard maps in both the domain and the codomain, we may assume that
    \[
    (\Phi(E_{21}))_{23} \neq 0;
    \]
    if a swap of the first two rows in the codomain is required to achieve this, we additionally multiply the second column by $-1$ in order to preserve the structure of the full square $\S^{12}_{12}\supseteq\Phi(\R_1)$. 

Then for a fixed $x\in\FF$
    \[
    \Phi(xE_{11}) = \begin{pmatrix} \alpha_0 & \alpha_0 \\ \beta_0 & -\beta_0 \end{pmatrix} \oplus 0.
    \]
    If $\alpha_0 \neq 0$, then $\Phi(\C_1)$, which contains  $\Phi(x E_{11})$ and $\Phi(E_{21})$,  would have nonzero entries in three columns and two rows, and hence would not be contained in any maximal additive subset of~$\Lambda^{\le1}$. Thus, $\alpha_0 = 0$ and
    consequently
    $$\Phi(\FF E_{11})\subseteq\Phi(\C_1) \subseteq \R_2.$$
    Since, by the assumptions, $\Phi(\R_1) \nsubseteq \R_2$, there exists a weighted matrix unit $x E_{1j}$ with $j \ge2$ such that
    \[
    \Phi(x E_{1j}) = \begin{pmatrix} \alpha_1 & \alpha_1 \\ \beta_1 & -\beta_1 \end{pmatrix} \oplus 0 \quad \text{with } \alpha_1 \neq 0.
    \]
    
    Now consider $\Phi(x E_{2j})$. It cannot have a nonzero entry in position $(2,3)$, since otherwise the image of $\C_j$ would have nonzero entries in three columns and two rows, and hence could not be contained in any maximal additive subset of~$\Lambda^{\le1}$.
    Moreover, it cannot have nonzero entries simultaneously in positions $(1,1)$ and $(1,2)$, since otherwise the image of $\R_2$, which contains $\Phi(xE_{2j})$ and $\Phi(E_{21})$, would have nonzero entries in three columns and two rows, and hence could not be contained in any maximal additive subset of $\Lambda^{\le1}$. \\
    Define per-rank-one matrices
    \[
    A_1 \coloneqq (E_{11} + x E_{1j}) + (E_{21} - x E_{2j}), \quad
    A_2 \coloneqq (2 E_{11} + 2x E_{1j}) + (E_{21} - x E_{2j}).
    \]
    Both $\Phi(A_1)$ and $\Phi(A_2)$ have a nonzero entry in position $(2,3)$, and at least one of them has nonzero entries in positions $(1,1)$ and $(1,2)$.  Hence as $\mathrm{char}\,\FF\neq2$, at least one of these two matrices has three nonzero columns and two nonzero rows, and therefore permanent rank is greater than~$1$, contradicting the assumption that $\Phi(\Lambda^1) \subseteq \Lambda^1$.
    \end{proof}
    \begin{lemma}\label{lem:line->squarepart2}
Under the assumptions of Lemma~\ref{lem:46} we actually have
$$\Phi(\M{n}) \subseteq (\R_1 + \R_2) \cap (\C_1 + \C_2).$$
\end{lemma}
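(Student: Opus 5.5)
By Lemma~\ref{lem:line-square} we already know $\Phi(\M{n})\subseteq \C_1+\C_2$. So it remains to show that no image $\Phi(A)$ has a nonzero entry at a position $(i,j)$ with $i\ge3$ and $j\in\{1,2\}$. By additivity it suffices to do this for weighted matrix units. We will mirror the argument of Lemma~\ref{lem:line-square}, now with rows and columns of the codomain interchanged, but we cannot simply transpose: the hypothesis $\Phi(\R_1)\subseteq\S^{12}_{12}$ is not symmetric under transposition, since $\S^{12}_{12}$ transposes to $\dotS^{12}_{12}$.

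Assume the contrary. Then some $xE_{st}$ has $(\Phi(xE_{st}))_{i_0j_0}\neq0$ with $i_0\ge3$ and $j_0\in\{1,2\}$. Since $\Phi(\R_1)\subseteq\S^{12}_{12}\subseteq\R_1+\R_2$, necessarily $s\ge2$. We normalize with standard maps. In the domain we permute rows $2,\dots,n$ and columns so that the unit is $E_{21}$, which fixes $\R_1$. In the codomain we permute rows $\ge3$ to get $i_0=3$, and possibly swap columns $1,2$. Swapping columns $1,2$ maps $\S^{12}_{12}$ onto itself after multiplying the second row by $-1$. So we may assume $(\Phi(E_{21}))_{31}\neq0$, while $\Phi(\M{n})\subseteq\C_1+\C_2$ and $\Phi(\R_1)\subseteq\S^{12}_{12}$ still hold.

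Write $\Phi(xE_{11})=\left(\begin{smallmatrix}\alpha_0&\alpha_0\\ \beta_0&-\beta_0\end{smallmatrix}\right)\oplus0$. Both $E_{11}$ and $E_{21}$ lie in $\C_1$, so by Lemma~\ref{lem:existence} the set $\Phi(\C_1)$ lies in a maximal additive subset of $\Lambda^{\le1}$. By Lemma~\ref{lem:Guterman2020} such a set is inside a line or inside two rows and two columns. The image $\Phi(E_{21})$ has a nonzero entry in row $3$. If $\Phi(xE_{11})\neq0$, it has both columns $1,2$ nonzero. Together these give three nonzero rows and two nonzero columns, which is too many. The only escape is that the combined support lies in a single column. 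But $\Phi(xE_{11})\neq0$ occupies both columns, so this escape fails, and we get $\Phi(xE_{11})=0$ for all $x$.

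Now use that $\Phi(\R_1)$ is not contained in a line. Pick $xE_{1j}$, $j\ge2$, with $\Phi(xE_{1j})=\left(\begin{smallmatrix}\alpha_1&\alpha_1\\ \beta_1&-\beta_1\end{smallmatrix}\right)\oplus0\neq0$. Consider $\Phi(xE_{2j})$.
\begin{itemize}
\item[(a)] It has no nonzero entry in rows $\ge3$. Otherwise $\Phi(\C_j)$, which also contains $\Phi(xE_{1j})$ with both columns $1,2$ nonzero, would have three rows and two columns.
\item[(b)] It cannot have both columns nonzero in rows $1,2$ in a way that spans two rows. Otherwise $\Phi(\R_2)$, which also contains $\Phi(E_{21})$, would have three nonzero rows and two nonzero columns.
\end{itemize}
We then test the per-rank-one matrices
\[
A_1=(E_{11}+xE_{1j})+(E_{21}-xE_{2j}),\qquad A_2=(2E_{11}+2xE_{1j})+(E_{21}-xE_{2j}).
\]
Both images have the nonzero $(3,1)$ entry coming from $\Phi(E_{21})$. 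Since $\charac\FF\neq2$, at least one of them keeps a nonzero row among rows $1,2$ in both columns $1,2$, via $2\Phi(xE_{1j})$ versus $\Phi(xE_{1j})$. That matrix then has support in at least two rows and two columns plus row $3$, so it has per-rank at least $2$ by Lemma~\ref{lemma:prk1_structure}, a contradiction.

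The step we expect to be the main obstacle is the bookkeeping in (b) and in the final support count. Because we only know $\Phi(\M{n})\subseteq\C_1+\C_2$, the images always use just two columns. So a ``three columns'' contradiction is unavailable, and we must instead produce three nonzero rows together with two nonzero columns. For this it matters that $\Phi(xE_{1j})$ has $\alpha_1$ and $\beta_1$ not both zero in each column. We expect to split into the cases $\alpha_1\neq0$ and $\beta_1\neq0$, choosing the row ($1$ or $2$) accordingly, and to verify that the coefficients $1$ and $2$ prevent the row from cancelling in both test matrices.
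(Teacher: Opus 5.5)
There is a genuine gap, and it is exactly the asymmetry you flag at the outset but then do not respect. A nonzero element $\alpha(E_{11}+E_{12})+\beta(E_{21}-E_{22})$ of $\S^{12}_{12}$ always occupies both columns $1,2$. However, it occupies only one of the rows $1,2$ whenever $\alpha\beta=0$.

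In Lemma~\ref{lem:line-square} this is harmless. There the extra entry lies in column $3$, so one row of the square together with the row of the extra entry already gives two rows and three columns. After your transposition the extra entry lies in row $3$. One row of the square plus row $3$ then gives only two rows and two columns, which is compatible with per-rank one.

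Concretely, take $\Phi(xE_{11})=x(E_{11}+E_{12})$ and $\Phi(E_{21})=E_{31}-E_{32}$. Both lie in the full square $\S^{13}_{12}$, so $\Phi(\C_1)\subseteq\S^{13}_{12}$ is consistent with everything you use. Hence your conclusion $\Phi(xE_{11})=0$ does not follow.

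The same count fails in step (a) and in the final contradiction. If $\Phi(xE_{1j})\in\R_1$ (i.e.\ $\beta_1=0$), then both $\Phi(A_1)$ and $\Phi(A_2)$ may lie in $\S^{13}_{12}$ and have per-rank one. Your proposed split into $\alpha_1\neq0$ or $\beta_1\neq0$ does not repair this, because choosing one of rows $1,2$ still gives only a two-row, two-column support. You would need $\alpha_1\neq0$ and $\beta_1\neq0$ simultaneously. No argument localized at one weighted matrix unit and the two lines through it supplies that.

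What is missing is a global case analysis on where $\Phi(\R_2)$ goes, which is how the paper proceeds. Since $\Phi(E_{21})$ has a nonzero $(3,1)$ entry, $\Phi(\R_2)$ lies in $\C_1$, in $\R_3$, or in a full square inside columns $1,2$ covering position $(3,1)$.
\begin{itemize}
\item The case $\C_1$ is ruled out by adding a suitable image from $\R_1$.
\item In the full-square case, that square must share exactly two positions with $\S^{12}_{12}$. Testing the per-rank-one matrices $(bE_{1j}-cE_{1j'})+\lambda(bE_{2j}+cE_{2j'})$, with $\lambda$ in the prime subfield, forces $\Phi(\R_1)$ into $\R_1$ or $\R_2$. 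This contradicts the standing assumption that $\Phi(\R_1)$ is not contained in a line.
\item In the case $\Phi(\R_2)\subseteq\R_3$, one uses that $\Phi(\R_1)$ is contained neither in $\R_1$ nor in $\R_2$. This gives some $u=xE_{1i}+yE_{1j}$ whose image has both $\alpha\neq0$ and $\beta\neq0$. Only then does a per-rank-one matrix such as $u+(xE_{2i}-yE_{2j})$ have an image with three nonzero rows in two columns, which is the contradiction you were aiming for.
\end{itemize}
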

\begin{proof}
 Assume otherwise. Notice first that we can assume $n\ge 3$, otherwise there is nothing to do. By composing $\Phi$ with column permutations and permutations of rows $3,\dots,n$ we can achieve that the image of a  weighted matrix unit $\Phi(x E_{21})$  from   $\Phi(\R_2)$   has a nonzero entry $\gamma$ at position $(3,1)$.
 Clearly, $\Phi(\R_2)$ cannot be contained in $\C_1$, for then,
 $$\Phi(x E_{21}+E_{11})=\left(\begin{smallmatrix}
     \ast+ \alpha & \alpha &0&\dots\\
      \ast+\beta & -\beta &0&\dots\\
      \gamma &0 &0&\dots\\
      \vdots &\vdots  && \ddots                                                                                     \end{smallmatrix}
      \right)$$
    (for some unspecified numbers $\ast$) would be of per-rank two, a contradiction.
    
    Assume  $\Phi(\R_2)$  is not contained in a line. Then, by Lemmas~\ref{lem-structure-of-prk1} and~\ref{lemma:max_subsets}, $\Phi(\R_2)$ is contained in some full square $\S$ which, due to the inclusion $\Phi(x E_{21})\in\Phi(\R_2)\subseteq\S$, must  occupy a position $(3,1)$. Also, by Lemma~\ref{lem:line-square}, all the positions which $\S$ occupies must lie in the first two columns.
   
    We will now use the elements from row $\R_2$ to force conditions on corresponding elements from row $\R_1$ to see that this case is  contradictory.  Choose  any nonzero weighted matrix unit $b E_{2j}\in \R_2$.
    By our assumption, $\Phi(\R_2)$ cannot lie in a line, so there exist a  (perhaps zero) weighted matrix unit $c E_{2j'}\neq b E_{2j}$, such that $\Phi(b E_{2j}+c E_{2j'})\in\S$  is  nonzero in two rows (one of them being the third row) and two columns.   Consider then a per-rank-one matrix
    $$A:= (b E_{1j}-c E_{1j'})+ \lambda(b E_{2j}+c E_{2j'});\quad \lambda\in {\mathbb k},$$
    whose image, $\Phi(A)\in\S^{12}_{12}+\lambda\Phi(b E_{2j}+c E_{2j'})$ is nonzero in two different columns and, for at least one $\lambda\in{\mathbb k}$, in at least two rows (one of them being the third row). Here and below ${\mathbb k}$ is a prime subfield of $\FF$.
    
    It implies that $\S$ must share exactly two positions in common with $\S^{12}_{12}$, otherwise, for some~$\lambda\in{\mathbb k}$,  $\Phi(A)$ would have three or four nonzero rows (and two nonzero columns) and would not be of per-rank-one.  Moreover, by writing
    $$\Phi(b E_{1j}-c E_{1j'})=\left(\begin{smallmatrix}
                                  \alpha&\alpha\\
                                  \beta&-\beta
                                 \end{smallmatrix}\right)\oplus 0_{n-2}$$
      we see that, if $\S$ and $\S^{12}_{12}$ share  positions $(2,1)$ and $(2,2)$, then $\alpha=0$ while if $\S$ and $\S^{12}_{12}$ share positions $(1,1)$ and $(1,2)$, then $\beta=0$; otherwise  we could again choose $\lambda\in{\mathbb k}$ so that $\Phi(A)$ would be nonzero in three rows and two columns, and would not be of per-rank-one.
      
        Without loss of generality assume that positions $(2,1)$ and $(2,2)$ are shared. Then
        \begin{equation}\label{eq:alpha}
        \alpha=\alpha(bE_{1j}-c E_{1j'})=0.
        \end{equation}
       Hence, if  $\Phi(b E_{2j})$ does not lie in a single line (so  may take $c=0$), then
        $$\Phi(b E_{1j})\in\R_2.$$
        Suppose $\Phi(b E_{2j})$ does  lie in a single line. If we can choose $cE_{2j'}$ so that $\Phi(c E_{2j'})$ also belongs to a single line, while their sum, $\Phi(b E_{2j}+c E_{2j'})$, does not, then $\Phi(b E_{2j}+\mu c E_{2j'})$ will also not belong to a single line for any nonzero $\mu\in{\mathbb k}$. By additivity of $\Phi$, it follows that
        $$0=\alpha(b E_{1j}-\mu c E_{1j'})=\alpha(b E_{1j})-\mu \alpha(c E_{1j'});\qquad \mu\in{\mathbb k}\setminus\{0\}$$
        and since $\mathrm{char}\,\FF\neq2$, we can choose at least two such $\mu\in{\mathbb k}$, giving yet again
        $\Phi(b E_{1j})\in\R_2$.  Finally, if no such  $\Phi(c E_{2j'})$ lies in a single line, then, by repeating the above arguments on $c E_{2j'}$ in place of $bE_{2j}$ (i.e., taking  temporarily $b=0$) reveals that
        $$\alpha(c E_{1j'})=0.$$
        Combined with additivity and~\eqref{eq:alpha}, we see that yet again $\alpha(b E_{1j})=0$, and hence $\Phi(b E_{1j})\in\R_2$. Since $b\in\FF$ and $j\in[n]$ were arbitrary, this shows that $\Phi(\R_1)\subseteq\R_2$, a contradiction.

To finish the proof it only remains to consider the possibility that  $$\Phi(\R_2)\subseteq \R_3.$$
Since $\Phi(\R_1)$ is not contained in a line there exists two weighted matrix units $x E_{1i}$ and $yE_{1j}$ (they  might coincide) in $\R_1$ such that
$$\Phi(x E_{1i}+y E_{1j}) =\left(\begin{smallmatrix}                                                                                                                                                                           \alpha &\alpha\\                                                                                                                                                                        \beta&-\beta                                                                                                                                                                                           \end{smallmatrix}\right)\oplus 0$$
for nonzero $\alpha$ and $\beta$. Then, a   per-rank one matrix
$$ T= \begin{cases}(x E_{1i}+y E_{1j}) +( x E_{2i} - y E_{2j});& xE_{2i}\neq yE_{2j}\\
     (x E_{1i}+y E_{1j})+E_{2j}; & \hbox{otherwise}
    \end{cases}
 $$
satisfies $T \in\R_1+\R_2$ and is mapped into
$$\Phi(T)=\Phi(x E_{1i}+y E_{1j})+\Phi(x E_{2i} - y E_{2j})\in \left(\begin{smallmatrix}                                                                                                                                                                           \alpha &\alpha\\                                                                                                                                                                        \beta&-\beta                                                                                                                                                                                           \end{smallmatrix}\right)\oplus 0+\R_3;\qquad \hbox{if }xE_{2i}\neq yE_{2j}$$
or into
$$\Phi(T)=\Phi(x E_{1i}+y E_{1j})+\Phi(E_{2j})\in \left(\begin{smallmatrix}                                                                                                                                                                           \alpha &\alpha\\                                                                                                                                                                        \beta&-\beta                                                                                                                                                                                           \end{smallmatrix}\right)\oplus 0+\R_3; \qquad \hbox{if }xE_{2i}= yE_{2j}.$$
As such $\Phi(T)$  has nonzero entries in the first two columns as well as in the first three rows, and hence cannot have per-rank one, a contradiction.
\end{proof}
\begin{lemma}\label{lem:TODO} If $\Phi$ maps a line into a nondegenerate square, then it does not preserve per-rank-one in both directions.
    \end{lemma}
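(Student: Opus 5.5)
Assume $\Phi$ maps a line into a nondegenerate square. Using standard maps in the domain and the codomain, we may take this line to be $\R_1$ and the square to lie in $\S^{12}_{12}$. This is exactly the setting of Lemmas~\ref{lem:46}--\ref{lem:line->squarepart2}. Lemma~\ref{lem:line->squarepart2} then gives
$$\Phi(\M{n})\subseteq(\R_1+\R_2)\cap(\C_1+\C_2),$$
so the whole image of $\Phi$ lies in the upper-left $2\times 2$ block. The plan is to exhibit a matrix $A$ with $\prk(A)\ge 2$ whose image $\Phi(A)$ nevertheless lies in $\Lambda^{\le 1}$. This contradicts preservation of per-rank-one in both directions, read as: $\Phi(A)\in\Lambda^1$ if and only if $A\in\Lambda^1$.

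First, dispose of the case where $\Phi$ has a nontrivial kernel. Suppose $\Phi(B)=0$ for some $B\ne0$. Take $A=E_{11}+E_{22}$, whose permanent rank is $2$, and consider $\Phi(A)$ and $\Phi(A+B)$, where $\Phi(A+B)=\Phi(A)$. If $\Phi(A)=0$, then $\Phi(E_{11})=-\Phi(E_{22})$, and we switch to a different per-rank-two test matrix instead.

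The cleaner route avoids kernels and works with dimension of the image. The image is an additive subgroup of the $2\times2$ block. Consider the three rank-two diagonal-type matrices
$$A_1=E_{11}+E_{22},\qquad A_2=E_{11}+E_{33},\qquad A_3=E_{22}+E_{33},$$
all with $\prk(A_i)=2$ (here $n\ge 3$ is used). Their images lie in the block $\M{2}\oplus0$. The key observation is that the block contains plenty of per-rank-one matrices, and we choose $A$ so that $\Phi(A)$ lands among them.

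The concrete choice is as follows. Take the weighted units $\Phi(E_{11}),\Phi(E_{22}),\Phi(E_{33})$. Each is of per-rank one, being the image of a per-rank-one matrix, and each lies in the block. Observe that $E_{ii}+E_{jj}\in\Lambda^2$ for $i\ne j$. Also $E_{ii}+\lambda E_{jj}$ has per-rank $2$ for $\lambda\ne0$. On the image side, the set of $\lambda\in\mathbb{k}$ for which $\per(\Phi(E_{ii})+\lambda\Phi(E_{jj}))=0$ is governed by a quadratic polynomial in $\lambda$:
$$\per\bigl(\Phi(E_{ii})\bigr)+\lambda\, c_{ij}+\lambda^2\per\bigl(\Phi(E_{jj})\bigr)=\lambda\, c_{ij},$$
where $c_{ij}$ is the mixed permanent term. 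The two outer terms vanish because both images have per-rank one. So if $c_{ij}=0$ for some pair, then $\Phi(E_{ii}+E_{jj})\in\Lambda^{\le1}$, and it is nonzero by injectivity on that pair. This yields the desired contradiction. If instead $\Phi(E_{ii}+E_{jj})=0$, we perturb to $E_{ii}+2E_{jj}$, which works because $\operatorname{char}\FF\ne2$.

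It remains to show that some $c_{ij}$ vanishes. The mixed permanent $c(X,Y)=x_{11}y_{22}+x_{22}y_{11}+x_{12}y_{21}+x_{21}y_{12}$ is a symmetric bilinear form on the $4$-dimensional space $\M{2}$, and it is the polarization of the quadratic form $\per$. The three images $X_1,X_2,X_3$ are isotropic vectors of $\per$, a form of Witt index $2$. If all pairwise $c(X_i,X_j)\ne0$, then adding the further test matrices $E_{44}$ (when $n\ge4$) or off-diagonal ones such as $E_{12}+E_{21}$ and $E_{13}+E_{31}$ gives additional isotropic images.

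The main obstacle is this linear-algebra step: forcing a pair of per-rank-$2$ preimages to have orthogonal isotropic images in the $2\times2$ block. I would attack it by considering the family $\Phi(E_{1j})$ and $\Phi(E_{2k})$. These lie in $\Phi(\R_1)$ and $\Phi(\R_2)$, and each of those sets is contained in a maximal additive set, namely a full square or a line in the block, which is totally isotropic. Two distinct totally isotropic planes of the $2\times2$ permanent form, such as $\S^{12}_{12}$ and $\dotS^{12}_{12}$, meet or pair in a controlled way. From this I would extract $j\ne k$ with $c\bigl(\Phi(E_{1j}),\Phi(E_{2k})\bigr)=0$, so that $E_{1j}+E_{2k}$, of per-rank $2$, maps into $\Lambda^{\le1}$.
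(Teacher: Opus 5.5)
\textbf{There is a genuine gap: the decisive step is asserted, not proved.} The following parts of your proposal are fine:
\begin{itemize}
\item the reduction to the $2\times2$ block via Lemma~\ref{lem:line->squarepart2};
\item the identity $\per(\Phi(P)+\lambda\Phi(Q))=\lambda\,c(\Phi(P),\Phi(Q))$ for $P,Q\in\Lambda^1$, $\lambda\in\mathbb k$;
\item the choice $\lambda\in\{1,2\}$ to guarantee a nonzero image.
\end{itemize}
The gap is the claim that $c(\Phi(E_{1j}),\Phi(E_{2k}))=0$ for some $j\neq k$. You leave it open, and the mechanism you suggest (the isotropic planes containing $\Phi(\R_1)$ and $\Phi(\R_2)$ ``meet or pair in a controlled way'') cannot give it.

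To see why, write $X\leftrightarrow ab^T$ when $X\circ\left(\begin{smallmatrix}1&-1\\1&1\end{smallmatrix}\right)=ab^T$ (entrywise product). This substitution turns $\per$ on $\M{2}$ into $\det$, and $c(ab^T,a'b'^T)=\det[a\,a']\det[b\,b']$. Suppose $\Phi(\R_1)$ and $\Phi(\R_2)$ lie in distinct planes of the same ruling, say $\Phi(E_{1j})\leftrightarrow v_ju_1^T$ and $\Phi(E_{2j})\leftrightarrow w_ju_2^T$ with $u_1\nparallel u_2$. Examples are $\S^{12}_{12}$ and $\C_1$, or $\S^{12}_{12}$ and its column-rescaled copy; the latter is exactly what happens in the paper's example with range in $\M{2}\oplus0_{n-2}$. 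Then the two planes meet only in $0$, $c$ pairs them nondegenerately, and $c(\Phi(E_{1j}),\Phi(E_{2k}))=0$ holds iff $v_j\parallel w_k$. The planes themselves do not decide this.

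Your diagonal triple fails for the same reason: three isotropic vectors can be pairwise non-orthogonal, e.g. $e_1e_1^T$, $e_2e_2^T$, $(e_1+e_2)(e_1-e_2)^T$. Two side remarks are also wrong. First, $E_{12}+E_{21}$ and $E_{13}+E_{31}$ have per-rank two, so nothing forces their images to be isotropic. Second, a merely additive $\Phi$ need not have a kernel.

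\textbf{How to close the gap.} You need more of the hypothesis, used as follows.
\begin{itemize}
\item \emph{Same ruling, coinciding planes.} Any $j\neq k$ works.
\item \emph{Same ruling, distinct planes.} Since $E_{1j}+E_{2j}\in\Lambda^1$, you get $w_j=\alpha_jv_j$ with $\alpha_j\neq0$. If $v_j\parallel v_k$ for some $j\neq k$, then $\Phi(E_{1j})$ and $\Phi(E_{2k})$ share a left factor and you are done. Otherwise take three distinct columns. The per-rank-one matrices $E_{1j}+E_{1k}+E_{2j}-E_{2k}$ have images with permanent $-(\alpha_j+\alpha_k)\det[v_j\,v_k]\det[u_1\,u_2]$. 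This forces $\alpha_j+\alpha_k=0$ for every pair, which is impossible since $\charac\FF\neq2$.
\item \emph{Mixed rulings,} $\Phi(E_{1j})\leftrightarrow v_ju_1^T$ and $\Phi(E_{2j})\leftrightarrow x\,y_j^T$. Here $E_{1j}+E_{2j}\in\Lambda^1$ gives $v_j\parallel x$ or $y_j\parallel u_1$ for each $j$. If some $y_k\parallel u_1$, take any $j\neq k$. Otherwise all $v_j\parallel x$, and any $j\neq k$ works.
\end{itemize}
With these steps added, your route becomes a complete and more uniform alternative to the paper's explicit case computation. The paper instead splits on whether some weighted matrix unit is sent to a nondegenerate square. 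As written, however, the key step is missing.
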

    \begin{proof}Assume first there exists a weighted matrix unit whose $\Phi$-image is a nondegenerate square.
       For simplicity (or by   composing $\Phi$ with standard maps in the domain and codomain) we can assume that  
    $$A:=\Phi(E_{11})=\left(\begin{smallmatrix}1&1\\
   1&-1\end{smallmatrix}\right)\oplus 0_{n-2}\in\S^{12}_{12};$$
   so in particular,  $A\in\S^{12}_{12}$.
    Clearly then, the additive subset $\Phi(\R_1)\subseteq\Lambda^{\le 1}$ contains a matrix $A$ of type $T_0$ so by Lemmas~\ref{lem:existence} and~\ref{lemma:max_subsets}, 
    $$\Phi(\R_1)\subseteq \S^{12}_{12}\quad\hbox{or}\quad \Phi(\R_1)\subseteq \dotS^{12}_{12}.$$
   Similarly,
    $$\Phi(\C_1)\subseteq \S^{12}_{12}\quad\hbox{or}\quad \Phi(\C_1)\subseteq \dotS^{12}_{12}.$$
Now, if  $\Phi(\R_1)+\Phi(\C_1)\subseteq\S^{12}_{12}$, then at least one of per-rank-two matrices $E_{12}+ \lambda E_{21}\in\R_1+\C_1$; $\lambda\in\{1,2\}$    would be mapped into $\S^{12}_{12}\setminus\{0\}$.
As this set consists solely of per-rank-one 
matrices such $\Phi$ clearly does not preserve per-rank-one in both directions. 

Assume next $\Phi(E_{12})\in\Phi(\R_1)$ is not contained in $\dotS^{12}_{12}$, and  $\Phi(E_{21})\in\Phi(\C_1)$ is not contained in $\S^{12}_{12}$.  Then,  for every $\mu,\nu\in{\mathbb k}$, the prime subfield of $\FF$, a per-rank-one matrix
$$\mu \nu E_{11}+ \mu  E_{12}+\nu E_{21}-E_{22}$$
    is mapped into a matrix whose upper-left $2$-by-$2$ block equals
$$\mu  \nu  \left(
\begin{array}{cc}
 1 & 1 \\
 1 & -1 \\
\end{array}
\right)+\mu  \left(
\begin{array}{cc}
 \alpha  & \alpha  \\
 \beta  & -\beta  \\
\end{array}
\right)+\nu  \left(
\begin{array}{cc}
 x & y \\
 x & -y \\
\end{array}
\right)-\left(
\begin{array}{cc}
 a   &b  \\
 c & d  \\
\end{array}
\right)$$
for suitable scalars, with $\alpha\neq \beta$ and $x\neq y$.  Its permanent equals
$$-\mu  (\beta  (b-a)+\alpha  (c+d))+\mu  \nu  (a-b-c-d+(\alpha
   -\beta ) (x-y))-\nu  (y (c-a)+x (b+d))+a d+b c,$$
and since there exist at least two nonzero $\mu,\nu\in {\mathbb k}$, after a short computation one sees that the only possibility that per-rank vanishes for every $\mu,\nu\in {\mathbb k}$ is 
$$\Phi( E_{22})=\left(
\begin{array}{cc}
 -x \alpha  & -y \alpha  \\
 -x \beta  & y \beta  \\
\end{array}
\right),$$
where  we used Lemma~\ref{lem:line->squarepart2} by which  $\Phi(\M{n})\subseteq\M{2}\oplus 0_{n-2}$.
Similar arguments for the $\Phi$-image of per-rank-one matrix $\mu\nu E_{11}+\mu E_{13}+\nu E_{21}-E_{23}$ gives
$$\Phi(E_{13})=\begin{pmatrix}
 \gamma  & \gamma  \\
 \delta  & - \delta  \\
\end{pmatrix}
\quad\hbox{and}\quad \Phi( E_{23})=\left(
\begin{array}{cc}
 -x \gamma  & -y\gamma  \\
 -x \delta  & y \delta  \\
\end{array}
\right).$$
Comparing with the $\Phi$-images of per-rank-one matrices $\mu\nu E_{12}+\mu E_{13}+\nu E_{22}-E_{23}$ we actually see that 
$\gamma\beta =\delta\alpha$ so that
$$
\Phi(E_{13})=t \begin{pmatrix}
 \alpha & \alpha  \\
 \beta  & - \beta  \\
\end{pmatrix}$$
We derive a similar conclusion for the image of $E_{31}\in\C_1$. Thus,  it is mapped into
$$\Phi(E_{31})=\epsilon\left(\begin{smallmatrix} x & y\\ x & -y\end{smallmatrix}\right).$$
This shows that the only possibility that $\Phi$ preserves per-rank-one in both directions is
\[
\big(\Phi(E_{ij})\bigl)_{ij}
=
\begin{pmatrix}
\left(\begin{smallmatrix}
1&1\\
1&-1
\end{smallmatrix}\right)
&
\left(\begin{smallmatrix}
\alpha&\alpha\\
\beta&-\beta
\end{smallmatrix}\right)
&
t\left(\begin{smallmatrix}
\alpha&\alpha\\
\beta&-\beta
\end{smallmatrix}\right)
\\[2ex]
\left(\begin{smallmatrix}
x&y\\
x&-y
\end{smallmatrix}\right)
&
-\left(\begin{smallmatrix}
 \alpha  x & \alpha  y \\
 \beta  x & -\beta  y \\
\end{smallmatrix}\right)
&
-t\left(\begin{smallmatrix}
 \alpha   x &\alpha   y \\
 \beta  x &- \beta   y \\
\end{smallmatrix}\right)
\\[2ex]
\epsilon\left(\begin{smallmatrix}
x&y\\
x&-y
\end{smallmatrix}\right)
&
-\epsilon\left(\begin{smallmatrix}
 \alpha   x &\alpha   y \\
 \beta  x &- \beta   y \\
\end{smallmatrix}\right)
&
 -t\epsilon\left(
\begin{smallmatrix}
 \alpha  x   & \alpha   y   \\
 \beta  x   & -\beta  y   \\
\end{smallmatrix}
\right)
\end{pmatrix}.
\]
However, one can show that per-rank-two matrix $E_{21}+E_{32}$ is then mapped into a $2$-by-$2$ matrix of per-rank-one, so $\Phi$ cannot preserve per-rank-one matrices in both directions.

Assume  lastly (as we may) that no  weighted matrix unit is mapped into the nondegenerate square, however, the first row is mapped into a nondegenerate square. Then, after suitable standard maps in domain and codomain we can assume that 
$$\Phi(E_{11})=\left(
\begin{array}{cc}
 1  & 1  \\
 0 & 0 \\
\end{array}
\right)\quad\hbox{and}\quad \Phi(E_{12})=\left(
\begin{array}{cc}
 0  & 0  \\
 1 & -1\\
\end{array}
\right).$$
Hence, $\Phi(\C_1)$ contains a matrix of type $T_2$, so by Lemmas~\ref{lemma:max_subsets} and \ref{lem:46}, $\Phi(\C_1)$ is contained in a square $\S^{12}_{12}$ or in a line $\R_1$. Since we assumed no weighted matrix unit is mapped into a nondegenerate square we must have $\Phi(E_{21})=\left(\begin{smallmatrix}
    \alpha&\beta\\
    0&0\end{smallmatrix}\right)$ or $\Phi(E_{21})=\left(\begin{smallmatrix}
    0&0\\
    \beta&-\beta\end{smallmatrix}\right)$. The last possibility would imply that $\Phi$ would map a per-rank-two matrix $E_{12}+E_{21}$ (if $\beta\neq-1$; or $E_{12}+2E_{21}$ if $\beta=-1$) into $\R_2$ and $\Phi$ would not preserve per-rank-one in both directions. Thus,
    $$\Phi(E_{21})=\left(\begin{smallmatrix}
    \alpha&\beta\\
    0&0\end{smallmatrix}\right)$$
Similar considerations with $E_{12},E_{22}\in\C_2$ give
$\Phi(E_{22})=\left(\begin{smallmatrix}
    0&0\\
    \gamma&\delta\end{smallmatrix}\right)$, and by considering the $\Phi$-image of per-rank-one matrix $\mu \nu E_{11}+\mu E_{12}+\nu E_{21}-E_{22}$, $\mu,\nu\in{\mathbb k}$, we conclude $(\gamma,\delta)=(-\alpha,\beta)$ or else $\gamma=-\delta$ and $\alpha=\beta$. The latter possibility implies that $\Phi(E_{ij})\in\S^{12}_{12}$ for $1\le i,j\le2$, and clearly $\Phi$ does not preserve per-rank-one in both directions. What remains to consider is 
    $$\bigl(\Phi(E_{ij})\bigr)_{ij}
=\left(
\begin{array}{cc}
 \left(
\begin{smallmatrix}
 1 & 1 \\
 0 & 0 \\
\end{smallmatrix}
\right) & \left(
\begin{smallmatrix}
 0 & 0 \\
 1 & -1 \\
\end{smallmatrix}
\right) \\
 \left(
\begin{smallmatrix}
 \alpha & \beta \\
 0 & 0 \\
\end{smallmatrix}
\right) & \left(
\begin{smallmatrix}
0 & 0 \\
 -\alpha & \beta \\
\end{smallmatrix}
\right) \\
\end{array}
\right);\qquad 1\le i,j\le 2.$$
 Proceeding similarly on matrix units in the third column (recall $\Phi(\R_1)\subseteq\S^{12}_{12}$), we see that $\Phi$ will not preserve per-rank-one in both directions except perhaps when  $\Phi(E_{13})=t\left(
\begin{smallmatrix}
0 & 0 \\
 1 & -1 \\
\end{smallmatrix}
\right)$  and $\Phi(E_{23})=\left(
\begin{smallmatrix}
0 & 0 \\
 -\alpha& \beta \\
\end{smallmatrix}
\right)$. Likewise for the third row, giving
$$\bigl(\Phi(E_{ij})\bigr)_{ij}
=\left(
\begin{array}{ccc}
 \left(
\begin{smallmatrix}
 1 & 1 \\
 0 & 0 \\
\end{smallmatrix}
\right) & \left(
\begin{smallmatrix}
 0 & 0 \\
 1 & -1 \\
\end{smallmatrix}
\right) & t\left(
\begin{smallmatrix}
 0 & 0 \\
 1 & -1 \\
\end{smallmatrix}
\right) \\
 \left(
\begin{smallmatrix}
 \alpha  & \beta  \\
 0 & 0 \\
\end{smallmatrix}
\right) & \left(
\begin{smallmatrix}
 0 & 0 \\
 -\alpha  & \beta  \\
\end{smallmatrix}
\right) &t \left(
\begin{smallmatrix}
 0 & 0 \\
 -\alpha   & \beta  \\
\end{smallmatrix}
\right) \\
 \left(
\begin{smallmatrix}
 \gamma  & \delta  \\
 0 & 0 \\
\end{smallmatrix}
\right) & \left(
\begin{smallmatrix}
 0 & 0 \\
 -\gamma  & \delta  \\
\end{smallmatrix}
\right) & t\left(
\begin{smallmatrix}
 0 & 0 \\
 -\gamma  &\delta \\
\end{smallmatrix}
\right) \\
\end{array}
\right).$$
By considering $\Phi(E_{21}+E_{23}+E_{31}-E_{33})$ one sees that this preserves per-rank-one only if $(\gamma,\delta)=s(\alpha,\beta)$ (i.e, they are linearly dependent). However, it then maps per-rank-two matrix $E_{21}+E_{23}+E_{31}$ into $\left(
\begin{smallmatrix}
 \alpha  (s+1) & \beta  (s+1) \\
 -\alpha  t & \beta  t \\
\end{smallmatrix}
\right)\in\Lambda^1$, so again $\Phi$ does not preserve per-rank-one in both directions.
\end{proof}

    We now consider the case when every line is mapped into a line. That is, no row or column is mapped into a nondegenerate square, i.e., a square that is not itself contained in a line. Using standard maps, we may assume without loss of generality that
    \[
    \Phi(\R_1) \subseteq \R_1.
    \]
    Since $\C_1$ intersects $\R_1$ in the domain of $\Phi$, their images must intersect in the codomain as well. Consequently, there are two possibilities:
    \[
    \Phi(\C_1) \subseteq \R_1 \quad \text{or} \quad \Phi(\C_1) \subseteq \C_j \;\; \text{for some }\; j \in [n].
    \]
    We begin with the first case. In the statements of the following lemmas, we implicitly assume that every line is mapped into a line.
 \begin{lemma}\label{lem:R1-C1}
    Let $\Phi \colon \M{n} \to \M{n}$ be an additive map satisfying $\Phi(\Lambda^1) \subseteq \Lambda^1$. Suppose that $\Phi(\R_1) \subseteq \R_1$ and $\Phi(\C_1) \subseteq \R_1$. Then the image of $\Phi$ is contained in a single line.
    \end{lemma}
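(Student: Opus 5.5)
We will show $\Phi(\M{n})\subseteq\R_1$. Throughout we use the standing assumption that every line is mapped into a line. Since $\M{n}=\sum_i \R_i$, it suffices to show $\Phi(\R_i)\subseteq \R_1$ for every $i$. Equivalently, we can show $\Phi(\C_j)\subseteq\R_1$ for every $j$, because $\M{n}=\sum_j\C_j$. We will first handle the columns and then deduce the rows.

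\textbf{Step 1: columns.} Fix $j\ge 2$. The column $\C_j$ meets $\R_1$ in the cell $\FF E_{1j}$, and $\C_j$ also meets every row $\R_i$. By assumption $\Phi(\C_j)$ lies in a line $L$. If $\Phi(\FF E_{1j})\neq 0$, then $L$ meets $\R_1$ in a nonzero matrix, so $L=\R_1$ or $L=\C_t$ for some $t$.

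We exclude $L=\C_t$ with $\Phi(\C_j)\not\subseteq\R_1$ by the mixing trick of the earlier lemmas. Pick $x E_{1j}$ and $y E_{s1}$ (with $s\ge2$) such that $\Phi(yE_{s1})\in\R_1$ has a nonzero entry outside column $t$. Also pick $zE_{sj}$ whose image lies in $\C_t$ and has a nonzero entry outside row $1$. Then consider the per-rank-one matrices
$$\lambda yE_{s1}+ zE_{sj}\in\R_s,\qquad \lambda\in{\mathbb k}.$$
Their images are sums of a row-$1$ vector and a column-$t$ vector, and for a suitable $\lambda$ (using $\charac\FF\ne2$) the image has at least two nonzero rows and two nonzero columns. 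It is then not contained in a line, so $\Phi(\R_s)$ is not contained in a line, contradicting the standing assumption.

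Here we need $\Phi(\C_1)$ to have a nonzero entry outside column $t$. If instead $\Phi(\C_1)\subseteq \R_1\cap\C_t$, we run the symmetric argument with the role of $\C_1$ played by $\R_1$, whose image lies in $\R_1$. When all such images are simultaneously confined to the single cell $\FF E_{1t}$, the conclusion holds trivially anyway.

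If instead $\Phi(\FF E_{1j})=0$, we compare $\C_j$ with $\C_1$ through the rows. For each $s$, $\Phi(\R_s)$ is a line containing $\Phi(\FF E_{s1})\subseteq \R_1$ and $\Phi(\FF E_{sj})$. Again we use per-rank-one matrices of the form $E_{s1}+\lambda E_{sj}$, and also $T_0$-type matrices such as $E_{11}+E_{1j}+E_{s1}-E_{sj}$, whose images must remain in $\Lambda^1$. These force $\Phi(\FF E_{sj})$ to lie in $\R_1$, or else in the column shared with $\Phi(E_{s1})$. The latter case is again excluded by the mixing argument with a second row.

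\textbf{Step 2: rows.} Once every column image lies in $\R_1$, we get $\Phi(\M{n})=\sum_j\Phi(\C_j)\subseteq\R_1$, which is a single line, and we are done.

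\textbf{Main obstacle.} The delicate part is the degenerate situation in which many of the images $\Phi(\C_1)$, $\Phi(\R_1)$ and $\Phi(\FF E_{1j})$ collapse into one cell or vanish, so that the line containing an image is not uniquely determined. I would treat this first by choosing a weighted matrix unit with nonzero image, which exists unless $\Phi=0$. Then I would propagate from that unit along rows and columns, at each stage applying Lemma~\ref{lemma:prk1_structure} to the images of the $T_0$-type matrices
$$\mu\nu E_{1a}+\mu E_{1b}+\nu E_{sa}-E_{sb},\qquad \mu,\nu\in{\mathbb k},$$
together with the requirement that the image of every line is a line. This should pin every image into row~$1$.
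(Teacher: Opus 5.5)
There is a genuine gap, and it starts with your target. You set out to prove $\Phi(\M{n})\subseteq\R_1$, but the lemma only asserts that the image lies in \emph{a single line}, and that line can be a column. Here is a counterexample to your target. Take $\FF=\mathbb{R}$ and an injective additive map $\psi\colon\M{n}\to\FF$; it exists because $\mathbb{R}^{n^2}$ and $\mathbb{R}$ are isomorphic $\mathbb{Q}$-vector spaces. Put $\Phi(A)=\psi(A)E_{11}+a_{22}E_{21}$.
\begin{itemize}
\item $\Phi(A)\neq 0$ for $A\neq 0$, and $\Phi(\M{n})\subseteq\C_1$. Hence $\Phi(\Lambda^1)\subseteq\Lambda^1$ and every line is mapped into a line.
\item $\Phi(\R_1)+\Phi(\C_1)\subseteq\FF E_{11}\subseteq\R_1$.
\item Yet $\Phi(E_{22})\notin\R_1$.
\end{itemize}
So your sentence ``when all such images are simultaneously confined to the single cell $\FF E_{1t}$, the conclusion holds trivially anyway'' is false. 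That is exactly the situation in which $\Phi(\C_j)\subseteq\C_t$ can leave $\R_1$, and what must be shown there is that \emph{everything} lies in $\C_t$. For the same reason, your final plan to ``pin every image into row~1'' cannot succeed. Your target does hold when $\FF$ is finite-dimensional over its prime subfield, since then $\Phi|_{\R_1}$ is injective and linear over the prime subfield and cannot land in one cell. The lemma, however, is stated for arbitrary fields.

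Two further ingredients are missing.

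\emph{Non-vanishing on matrix units.} Every weighted matrix unit lies in $\Lambda^1$, so $\Phi(xE_{ij})\neq 0$ for $x\neq 0$. Your branch $\Phi(\FF E_{1j})=0$ is therefore vacuous. More importantly, this non-vanishing drives the paper's incidence argument:
\begin{itemize}
\item If some unit's image leaves $\R_1$, normalize so that $(\Phi(E_{22}))_{21}\neq 0$.
\item $\Phi(\R_2)$ contains the nonzero element $\Phi(E_{21})\in\R_1$, and $\Phi(\C_2)$ contains the nonzero element $\Phi(E_{12})\in\R_1$. Both also contain $\Phi(E_{22})$, so both lie in $\C_1$.
\item For $i,j\ge 3$, the images of $\R_i$ and $\C_j$ meet both $\R_1$ and $\C_1$ nontrivially, so each lies in $\R_1$ or in $\C_1$.
\item Hence $\Phi(\M{n})\subseteq\R_1+\C_1$.
\end{itemize}

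\emph{Use of the permanent itself.} Line-incidence alone cannot exclude a genuine mixture. For example, $\Phi(E_{11})\in\R_1\setminus\C_1$, $\Phi(E_{22})\in\C_1\setminus\R_1$ and $\Phi(E_{12}),\Phi(E_{21})\in\FF E_{11}$ is consistent with every line going into a line. Your mixing trick needs $yE_{s1}$ and $zE_{sj}$ in the \emph{same} row $s$, which you never secure. The paper closes this with a permanent computation. It first moves a unit with image in $\R_1\setminus\C_1$ to $E_{11}$, using standard maps that fix $E_{22}$. The per-rank-one matrix $E_{11}+E_{12}+E_{21}-E_{22}$ (or its $\mu,\nu$-version) is then mapped to a matrix whose leading $2\times 2$ block has permanent $-b_{11}b_{22}\neq 0$. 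Here $b_{11}=(\Phi(E_{11}))_{12}$ and $b_{22}=(\Phi(E_{22}))_{21}$. This forces $\Phi(\M{n})\subseteq\C_1$ in that branch, which is precisely the alternative your proposal discards.
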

\begin{proof}
 Suppose, contrary to the claim,  that there exists a weighted matrix unit whose image has a nonzero entry outside the first row. 
    Using standard maps both in the domain and in the codomain we may assume that
    \begin{equation}\label{eq:Phi(xE22)}
    b_{22}:= \bigl(\Phi( E_{22})\bigr)_{21} \neq 0.
    \end{equation}
    Since $\Phi(\FF E_{21})\subseteq\Phi(\R_2)\cap\Phi(\C_1)\subseteq\Phi(\R_2)\cap\R_1\neq\{0\}$, and by~\eqref{eq:Phi(xE22)},
    the only maximal additive subsets of $\Lambda^{\le1}$ that can contain the image of $\R_2$  are the first column  or different full squares. The latter possibility would imply that a line is mapped into a full square, a case that has already been treated earlier.   Similar  arguments with the same conclusions hold for the image of $\C_2$. Hence,
    \begin{equation}\label{eq:Phi(R2)inC1}
    \Phi(\R_2) \subseteq \C_1 \quad \text{and} \quad
    \Phi(\C_2) \subseteq \C_1.
    \end{equation}
Take now a row $\R_3$, which intersects $\C_1$ as well as $\C_2$ in per-rank-one matrices. Then, its image must intersect $\Phi(\C_1)\subseteq\R_1$ as well as $\Phi(\C_2)\subseteq\C_1$ in per-rank-one matrices. Since $\Phi(\R_3)$ is contained in a line, the only two possibilities are that
$$\Phi(\R_3)\subseteq\C_1\quad\hbox{ or }\quad\Phi(\R_3)\subseteq\R_1.$$
Likewise we see that
$$\Phi(\R_i),\Phi(\C_j)\subseteq\C_1\cup\R_1;\qquad i,j\ge 3.$$
Hence, by additivity,
$$\Phi(\M{n})\subseteq\R_1+\C_1.$$ 
Now, if $\Phi(\M{n})\subseteq\C_1$ there is nothing to do. Otherwise, there must exist a weighted matrix unit $yE_{ij}\notin\C_2+\R_2$, with $\Phi(yE_{ij})\in\R_1\setminus \C_1$. Again, we can  compose $\Phi$ with  standard maps in the domain, which fix $E_{22}$, to achieve that $y E_{ij}=E_{11}$.  Observe that the changed $\Phi$ satisfies $\Phi(E_{22})\in\C_1\setminus\R_1$ (by~\eqref{eq:Phi(xE22)}) and $\Phi(E_{11})\in\R_1\setminus\C_1$, and so
\begin{equation}\label{eq:aux}    \Phi(\C_2)\subseteq\C_1\setminus\R_1\hbox{ and } \Phi(\R_1)\subseteq\R_1\setminus\C_1.
\end{equation}

With no loss of generality assume $b_{11}:=\bigl(\Phi(E_{11})\bigr)_{12}\neq0$; otherwise we permute column $2$ and column $i\ge3$ in codomain to achieve this. Consider now  per-rank-one matrices $\mu\nu E_{11}+\mu E_{12}+\nu E_{21}-E_{22}$ for $\mu,\nu\in{\mathbb k}$, the prime subfield of $\FF$. They are mapped into
\begin{equation}\label{eq:aux2}
    \mu\nu \Phi(E_{11})+\mu\Phi(E_{12})+\nu\Phi(E_{21})-\Phi(E_{22}).
\end{equation}
The first summand is contained in $\FF\Phi(E_{11})\subseteq\R_1\setminus\C_1$. The second and third are contained in 
$$\Phi(\R_1)\cap\Phi(\C_2)\subseteq\R_1\cap\C_1=\FF E_{11}\hbox { and }\Phi(\R_2)\cap\Phi(\C_1)\subseteq\C_1\cap\R_1=\FF E_{11}$$ (for the first one use~\eqref{eq:aux}, for the second one use~$\Phi(E_{22})\in\Phi(\R_2)$ and $\Phi(E_{11})\in\Phi(\R_1)$). Hence, the compression to $2$-by-$2$ block of~\eqref{eq:aux2} equals
$$\mu  \nu  \left(
\begin{array}{cc}
 a_{11} & b_{11} \\
 0 & 0 \\
\end{array}
\right)+\mu  \left(
\begin{array}{cc}
 a_{12} & 0 \\
 0 & 0 \\
\end{array}
\right)+\nu  \left(
\begin{array}{cc}
 a_{21} & 0 \\
 0 & 0 \\
\end{array}
\right)-\left(
\begin{array}{cc}
 a_{22} & 0 \\
 b_{22} & 0 \\
\end{array}
\right)$$
and its permanent, $-\mu  \nu b_{11} b_{22} $ is clearly nonzero, a contradiction.

\end{proof}
    \begin{lemma}\label{lem:sigma-tau}
    Assume $\Phi$ maps rows to rows and columns to columns. There exist  transformations $\sigma, \tau\colon[n]\to[n]$ and additive functions $f_{11}, f_{12}, \ldots, f_{nn}\colon\F\to\F$ such that
   \begin{equation}
         \Phi((a_{ij}))  = P_\sigma^T\bigl(f_{ij}(a_{ij})\bigr)_{ij}P_\tau \quad \forall A = (a_{ij}) \in \M{n}.
     \label{eq:l4.11}
 \end{equation}
Moreover, if $\Im\Phi$ is not contained in a single row or a single column, then there exist invertible diagonal $D_1,D_2$ such that
$$\Phi((a_{ij}))  = P_\sigma^TD_1\bigl(g_{ij}(a_{ij})\bigr)_{ij} D_2P_\tau$$ for some additive $g_{ij}\colon\FF\to\FF$ with $g_{ij}(1)=1$.
\end{lemma}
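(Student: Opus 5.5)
Define $\sigma$ and $\tau$ from the hypothesis that rows go to rows and columns go to columns. For each $i$ let $\sigma(i)$ be an index with $\Phi(\R_i)\subseteq\R_{\sigma(i)}$, and for each $j$ let $\tau(j)$ be an index with $\Phi(\C_j)\subseteq\C_{\tau(j)}$. If $\Phi(\R_i)=0$, any choice works. Then $\Phi(\FF E_{ij})\subseteq\Phi(\R_i)\cap\Phi(\C_j)\subseteq\R_{\sigma(i)}\cap\C_{\tau(j)}=\FF E_{\sigma(i)\tau(j)}$. So there are additive maps $f_{ij}\colon\FF\to\FF$ with $\Phi(xE_{ij})=f_{ij}(x)E_{\sigma(i)\tau(j)}$, where additivity of $f_{ij}$ comes from additivity of $\Phi$. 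Summing over all $(i,j)$ and using $P_\sigma^TE_{ij}P_\tau=E_{\sigma(i)\tau(j)}$, additivity of $\Phi$ gives~\eqref{eq:l4.11} immediately.

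For the second part, the key claim is that every $f_{ij}$ is injective, and in particular $f_{ij}(1)\neq0$. Suppose $f_{ij}(x)=0$ for some $x\neq0$. We use that $\Im\Phi$ is not contained in a single row or column. Pick $k\neq i$ and $l\neq j$ and consider the per-rank-one matrices $xE_{ij}+yE_{il}$ in row $i$ and $xE_{ij}+yE_{kj}$ in column $j$. Their images are $f_{il}(y)E_{\sigma(i)\tau(l)}$ and $f_{kj}(y)E_{\sigma(k)\tau(j)}$, which must be nonzero. Hence every $f_{il}$ with $l\neq j$ and every $f_{kj}$ with $k\neq i$ is nowhere-zero on $\FF\setminus\{0\}$, so these are injective.

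Next take the $2\times2$ per-rank-one matrix $xE_{ij}+yE_{il}+zE_{kj}+wE_{kl}$ with $xw+yz=0$. Its image is $f_{il}(y)E_{\sigma(i)\tau(l)}+f_{kj}(z)E_{\sigma(k)\tau(j)}+f_{kl}(w)E_{\sigma(k)\tau(l)}$. The aim is to derive a contradiction from this. One subcase is when $\sigma(i)\neq\sigma(k)$ and $\tau(j)\neq\tau(l)$. Then, choosing $y,z\neq0$ and $w=-yz/x$, the image has a nonzero per-minor unless $f_{kl}(w)$ cancels it; varying over prime-field multiples gives per-rank two. The other subcase is a collapse $\sigma(i)=\sigma(k)$ or $\tau(j)=\tau(l)$. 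There we use that the image is not inside a single line to find indices that separate. The case analysis over which of $\sigma,\tau$ collapse is the expected main obstacle. I would first try to handle it by noting that if $\sigma$ collapses rows $i,k$, the matrix $xE_{ij}-xE_{kj}$ (rank-one, per-rank-one) would require $f_{ij}(x)\neq f_{kj}(x)$, and combining this with the relation above forces $f_{ij}(x)\neq0$.

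Once $f_{ij}(1)\neq0$ for all $i,j$, the rescaling comes from the $2\times2$ per-minors. For a matrix $\sum_{i,j}E_{ij}$ restricted to rows $i,k$ and columns $j,l$, replace it by the per-rank-one matrix $E_{ij}+E_{il}+E_{kj}-E_{kl}$. Its image must have vanishing permanent whenever the four positions are distinct, giving $f_{ij}(1)f_{kl}(1)=f_{il}(1)f_{kj}(1)$ up to the sign. This multiplicative rank-one relation lets us write $f_{ij}(1)=d_id'_j$, so we set $D_1=\diag(d_i)$ and $D_2=\diag(d'_j)$. When $\sigma$ or $\tau$ collapses, the four positions are not distinct and the relation is unavailable. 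In that case we use the images of $E_{ij}+E_{il}$ and $E_{ij}+E_{kj}$ (all line-contained), and the normalization can be chosen freely. We then define $g_{ij}(x)=f_{ij}(x)/(d_id'_j)$, which satisfies $g_{ij}(1)=1$. Finally, since $D_1,D_2$ are diagonal, we verify $P_\sigma^TD_1\bigl(g_{ij}(a_{ij})\bigr)D_2P_\tau$ equals~\eqref{eq:l4.11}: the entry $(i,j)$ of $D_1(g_{ij})D_2$ is exactly $d_id'_jg_{ij}(a_{ij})=f_{ij}(a_{ij})$.
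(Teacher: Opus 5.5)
Your first part (cells are mapped into cells, which gives \eqref{eq:l4.11}) agrees with the paper. So does your use of the vanishing permanent of $\Phi(E_{ij}+E_{il}+E_{kj}-E_{kl})$ when $\sigma(i)\neq\sigma(k)$ and $\tau(j)\neq\tau(l)$. The gap is in the collapsed case, where your claim that ``the normalization can be chosen freely'' is false.

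Evaluating the desired formula at $A=aE_{ij}$ gives $\Phi(aE_{ij})=d_id'_j\,g_{ij}(a)E_{\sigma(i)\tau(j)}$. So $g_{ij}(1)=1$ forces $f_{ij}(1)=d_id'_j$ for every pair $(i,j)$. That is, $F=(f_{ij}(1))_{ij}$ must have rank one, and you need $f_{ij}(1)f_{kl}(1)=f_{il}(1)f_{kj}(1)$ also when $\sigma(i)=\sigma(k)$ or $\tau(j)=\tau(l)$. In those cases the test matrices you mention have images inside a single line (e.g.\ $\Phi(E_{ij}+E_{il}+E_{kj}-E_{kl})\in\R_{\sigma(i)}$ if $\sigma(i)=\sigma(k)$). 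Per-rank-one preservation then only tells you these images are nonzero and yields no multiplicative relation at all.

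The missing idea is to route through a third index. Since $\Im\Phi$ is not inside one row or column, there are rows $p,q$ with $\sigma(p)\neq\sigma(q)$ and columns $r,s$ with $\tau(r)\neq\tau(s)$; otherwise $\Im\Phi=\sum_i\Phi(\R_i)$ would lie in a single row, resp.\ column. Every $\sigma(i)$ differs from $\sigma(p)$ or $\sigma(q)$, and every $\tau(j)$ differs from $\tau(r)$ or $\tau(s)$.

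As all $f_{ij}(1)\neq0$, proportionality of rows of $F$ is transitive. For $\sigma(i)\neq\sigma(k)$ you get $f_{ij}(1)/f_{kj}(1)=f_{il}(1)/f_{kl}(1)$ whenever $\tau(j)\neq\tau(l)$, hence for all $j,l$ by passing through $r$ or $s$. Two rows with equal $\sigma$-value are both proportional to whichever of rows $p,q$ has a different one. Thus $\mathrm{rk}\,F=1$. This is exactly the paper's argument: normalize $\sigma(1)\neq\sigma(2)$, $\tau(1)\neq\tau(2)$, and extend the dependence column by column, then row by row.

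Separately, your long case analysis for $f_{ij}(x)\neq0$, which you left unfinished, is unnecessary. For $x\neq0$ the matrix $xE_{ij}$ itself lies in $\Lambda^1$, so $f_{ij}(x)E_{\sigma(i)\tau(j)}=\Phi(xE_{ij})\in\Lambda^1$ cannot be zero.
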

    \begin{proof}
    By the assumptions, there exist transformations $\sigma,\tau\colon[n]\to[n]$  with
    $$\Phi(\R_i)\subseteq\R_{\sigma(i)}\quad\hbox{ and }\quad \Phi(\C_j)\subseteq \C_{\tau(j)}.$$
Since $\FF E_{ij}=\R_i\cap \C_j$ we then have
$\Phi(\FF E_{ij})\subseteq\Phi(\R_i)\cap\Phi(\C_j)\subseteq\R_{\sigma(i)}\cap \C_{\tau(j)}=\FF E_{\sigma(i)\tau(j)}$.
In particular,
$$\Phi(x E_{ij})=f_{ij}(x) E_{\sigma(i)\tau(j)}$$ for some additive functions $f_{ij}\colon\F\to
\F$.
By additivity this gives that $A=(a_{ij})\in \M{n}$ is mapped into
$$\Phi(A)=\Phi(\sum_{ij} a_{ij} E_{ij}) =\sum_{ij} f_{ij}(a_{ij}) E_{\sigma(i)\tau(j)}=P_{\sigma}^T(f_{ij}(a_{ij}))P_\tau. $$

To prove the last claim, we suppose $\operatorname{Im}\Phi$ is not contained in a single line. Since $\operatorname{Im}\Phi$ is then not contained in a single row (respectively, column), there exist $i_1\neq i_2$ with $\sigma(i_1)\neq\sigma(i_2)$ (respectively, $j_1\neq j_2$ with $\tau(j_1)\neq\tau(j_2)$); composing $\Phi$ with suitable row- and column-permutations of the domain and codomain, we may thus assume for simplicity and with no loss of generality that $\Phi(\R_i)\subseteq \R_i$ and $\Phi(\C_j)\subseteq \C_j$ (so that $\sigma(i)=i$ and $\tau(j)=j$) for $i,j=1,2$.
Then,
 \[
    \det\begin{pmatrix} f_{11}(1) & f_{12}(1) \\ f_{21}(1) & f_{22}(1) \end{pmatrix} = \per \begin{pmatrix} f_{11}(1) & f_{12}(1) \\ f_{21}(-1) & f_{22}(1) \end{pmatrix} = 0
    \]
since $\left(\begin{smallmatrix} f_{11}(1) & f_{12}(1) \\ f_{21}(-1) & f_{22}(1) \end{smallmatrix} \right)\oplus 0_{n-2} = \Phi(\left(\begin{smallmatrix} 1 & 1 \\ -1 & 1 \end{smallmatrix}\right)\oplus 0_{n-2})$ and $\prk(\left(\begin{smallmatrix} 1 & 1 \\ -1 & 1 \end{smallmatrix}\right)\oplus 0_{n-2})=1$.
It follows that the two vectors $(f_{11}(1),f_{12}(1))$ and $(f_{21}(1),f_{22}(1))$ are linearly dependent.

Now, take  $E_{13}$. By the assumptions, $\Phi(\C_3)\subseteq\C_{j'}$ and  at least one among indices $1,2$ differs from $j'$. With no loss of generality,  $j'\neq 1$ and by temporarily   permuting columns of $\Phi(\cdot)$ with a suitable permutation that fixes $1$ we achieve $j'=2$. Arguing as above,
$$\det\left(\begin{smallmatrix}
f_{11}(1) & f_{13}(1) \\ f_{21}(1) & f_{23}(1) \end{smallmatrix} \right)=0$$
and since $f_{ij}(1) \neq 0$ (by $f_{ij}(1)E_{\sigma(i)\tau(j)}=\Phi(E_{ij})\neq0$),  the truncated rows $(f_{11}(1),f_{12}(1),f_{13}(1))$ and $(f_{21}(1),f_{22}(1),f_{23}(1))$ are linearly dependent. Proceeding recursively, the first two rows of $(f_{ij}(1))$ are linearly dependent.

Consider matrix units from the  third row $E_{31},E_{32},\dots\in\R_3$. They are mapped into $\R_{i'}$ and at least one among indices $1,2$ differs from $i'$; again we can assume $i'\neq 1$. By applying the above arguments   we see that the first and the third row of $(f_{ij}(1))$ are linearly dependent. Continuing, we get that all its rows are linearly dependent so $\mathrm{rk}(f_{ij}(1))=1$. We can write it as
$$(f_{ij}(1))_{ij}=xy^T;$$
for some $x=(x_1,\dots,x_n)^T$ and $y=(y_1,\dots,y_n)^T$, and since $f_{ij}(1)\neq0$,
then $D_1:=\diag(x)$ and $D_2:=\diag(y)$ and $g_{ij}=\frac{1}{x_iy_j}f_{ij}$ are as required.
    \end{proof}
    \begin{lemma}\label{lem:final-onedireciton}
    Assume $\Im \Phi$ is not contained in a single line.  Then, the functions $g_{11} = g_{12} = \cdots = g_{nn} \eqqcolon \varphi$ coincide and $\varphi$ is a field homomorphism of~$\F$.
    \end{lemma}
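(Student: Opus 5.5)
We work in the normal form of Lemma~\ref{lem:sigma-tau}, $\Phi(A)=P_\sigma^TD_1\bigl(g_{ij}(a_{ij})\bigr)D_2P_\tau$ with $g_{ij}(1)=1$. The rows and columns of $\Phi(A)$ are then, up to nonzero scaling, sums of the values $g_{ij}(a_{ij})$ over the fibres of $\sigma$ and $\tau$. Since $\Im\Phi$ is not contained in a line, both $\sigma$ and $\tau$ take at least two values. The plan is to feed $\Phi$ per-rank-one test matrices of type $T_0$ supported on a $2\times2$ block in rows $i_1,i_2$ and columns $j_1,j_2$. The block rows $i_1,i_2$ should have $\sigma(i_1)\ne\sigma(i_2)$, and the block columns $j_1,j_2$ should have $\tau(j_1)\ne\tau(j_2)$. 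Such a test matrix is sent to a matrix supported on the $2\times2$ block in rows $\sigma(i_1),\sigma(i_2)$ and columns $\tau(j_1),\tau(j_2)$. Its image has per-rank at most one exactly when the image block has zero permanent. After the diagonal normalisation this condition reads
\[
g_{i_1j_1}(a)g_{i_2j_2}(d)+g_{i_1j_2}(b)g_{i_2j_1}(c)=0 \quad\text{whenever } ad+bc=0 .
\]

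\textbf{Step 1 (functions agree on a good block).} Take $b=c=1$ and $d=-a$ for $a\ne0$. Additivity gives $g(-a)=-g(a)$, so the displayed identity yields $g_{i_1j_1}(a)g_{i_2j_2}(a)=1$. Taking instead $a=d=1$ and $c=-b$ gives $g_{i_1j_2}(b)g_{i_2j_1}(b)=1$. Next use the general rank-one-style family $a=x$, $b=xy$, $c=-1$, $d=y$, where $ad+bc=xy-xy=0$. This yields
\[
g_{i_1j_1}(x)g_{i_2j_2}(y)=g_{i_1j_2}(xy)g_{i_2j_1}(1)=g_{i_1j_2}(xy).
\]
Setting $y=1$ gives $g_{i_1j_1}=g_{i_1j_2}$. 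Setting $x=1$ gives $g_{i_2j_2}=g_{i_1j_2}$. Hence all four functions coincide; call the common function $\varphi$. Then $\varphi(x)\varphi(y)=\varphi(xy)$, so $\varphi$ is multiplicative. Being additive as well, with $\varphi(1)=1$, it is a field homomorphism, and therefore injective.

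\textbf{Step 2 (propagation to all positions).} Indices $i,i'$ with $\sigma(i)=\sigma(i')$ cannot lie in a common good block, so they need an intermediary. Given any $(i,j)$, choose $i''$ with $\sigma(i'')\ne\sigma(i)$ and $j''$ with $\tau(j'')\ne\tau(j)$. Step 1 applied to the block on rows $\{i,i''\}$ and columns $\{j,j''\}$ gives $g_{ij}=g_{i''j''}$. Now compare two arbitrary positions $(i,j)$ and $(k,l)$. We link them by a chain of good blocks, each consecutive pair sharing at least one position. Because the image of $\sigma$ has at least two values, any two rows are joined through a row in a different $\sigma$-fibre, and the same holds for columns via $\tau$. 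Therefore all $g_{ij}$ are equal to $\varphi$.

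I expect the main obstacle to be this bookkeeping with non-injective $\sigma,\tau$. The fix is the observation that, for every $i$ and $j$, a partner $i''$ with $\sigma(i'')\ne\sigma(i)$ and a partner $j''$ with $\tau(j'')\ne\tau(j)$ exist. Two positions $(i,j)$ and $(k,l)$ can then be linked through the intermediate position $(i'',j'')$, provided $i''$ is chosen to differ in fibre from both $i$ and $k$. If that is impossible, an extra step through a third row is used, which is available because $n\ge3$.

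A second subtlety concerns Step 1 itself. When $\sigma(i_1)\ne\sigma(i_2)$ and $\tau(j_1)\ne\tau(j_2)$, the image of the test matrix is a genuine $2\times2$ permanent in distinct rows and columns, so the vanishing-permanent condition applies. The diagonal scalings from $D_1,D_2$ cancel in the identity because of the normalisation $g_{ij}(1)=1$.
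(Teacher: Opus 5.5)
Your approach is the paper's. You test $\Phi$ on per-rank-one matrices supported in a $2\times2$ block whose rows lie in different $\sigma$-fibres and whose columns lie in different $\tau$-fibres. The image is then a genuine $2\times2$ block whose permanent must vanish, and you propagate from there. Your family $\left(\begin{smallmatrix} x & xy\\ -1 & y\end{smallmatrix}\right)$ merges the paper's two test families: $\left(\begin{smallmatrix} x & x\\ 1 & -1\end{smallmatrix}\right)$, used for equality of the $g_{ij}$, and $\left(\begin{smallmatrix} -1 & a\\ b & ab\end{smallmatrix}\right)$, used for multiplicativity. Your handling of non-injective $\sigma,\tau$ is also more explicit than the paper's ``repeated application''. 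That core argument is correct.

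However, the first two computations in Step 1 are false and must be deleted. With $b=c=1$, $d=-a$, the test block $\left(\begin{smallmatrix} a & 1\\ 1 & -a\end{smallmatrix}\right)$ has permanent $1-a^2$, so in general it is not of per-rank one. The resulting identity $g_{i_1j_1}(a)g_{i_2j_2}(a)=1$ is false: take $\Phi$ the identity over $\mathbb{Q}$ and $a=2$. The same happens with $a=d=1$, $c=-b$, whose block has permanent $1-b^2$. (The choice $d=-a^{-1}$ would give the true but unneeded $g_{i_1j_1}(a)g_{i_2j_2}(a^{-1})=1$.) Nothing later uses these identities, so removing them is harmless. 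But then your remaining family only yields $g_{i_1j_1}=g_{i_1j_2}=g_{i_2j_2}$. To include $g_{i_2j_1}$ you must note that the setup is symmetric under $j_1\leftrightarrow j_2$, or use $a=x$, $b=-1$, $c=xy$, $d=y$.

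Two smaller points. First, the diagonal factors drop out because they contribute a nonzero scalar factor to the image permanent; the normalisation $g_{ij}(1)=1$ is only what turns $g(\pm1)$ into $\pm1$. Second, in Step 2 the appeal to $n\ge3$ is beside the point, since a third row need not lie in a third $\sigma$-fibre. The clean propagation is as follows. Within a fixed row $i$, $g_{ij}=g_{ij'}$ directly if $\tau(j)\ne\tau(j')$, and via any $j''$ with $\tau(j'')\ne\tau(j)$ otherwise. The same holds within a fixed column. Then pass $(i,j)\to(i,l)\to(k,l)$.
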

    \begin{proof}
    Again we can assume that $\Phi(\R_i)\subseteq\R_i$ and $\Phi(\C_j)\subseteq\C_j$ (or equivalently, $\sigma(i)=i$ and $\tau(j)=j$) for $i,j=1,2$. Choose $x\in\FF$ and consider a matrix $\left(\begin{smallmatrix} x & x \\ 1 & -1 \end{smallmatrix}\right)\oplus 0_{n-2}$ of per-rank one. Its $\Phi$-image, which by Lemma~\ref{lem:sigma-tau} equals
    $$P_\sigma^T D_1\left(\left(\begin{smallmatrix} g_{11}(x) & g_{12}(x) \\ 1 & -1 \end{smallmatrix}\right)\oplus 0_{n-2}\right)D_2P_\tau=D_1\left(\left(\begin{smallmatrix} g_{11}(x) & g_{12}(x) \\ 1 & -1 \end{smallmatrix}\right)\oplus 0_{n-2}\right)D_2$$  is again of per-rank one, and so
     $$0 = \per \left(\begin{matrix} g_{11}(x) & g_{12}(x) \\ 1 & -1 \end{matrix}\right) = g_{12}(x) - g_{11}(x).$$
     Next, the column which contains  $\Phi(\C_3)$ is disjoint  from $\C_1$ or from $\C_2$ and by the same procedure we get $g_{11}(x)=g_{12}(x)=g_{13}(x)$. Repeated application of this procedure yields the conclusion that $g_{ij}(x) = g_{kl}(x)=:\varphi(x)$ for all $x\in\FF$ and $i, j, k, l\in[n]$.
     
    Now consider $\left(\begin{smallmatrix} -1 & a \\ b & a b \end{smallmatrix}\right)\oplus0_{n-2}\in\Lambda^1$. Its $\Phi$-image, $$D_1\left(\left(\begin{smallmatrix} -1 & \varphi(a) \\ \varphi(b) & \varphi(a b) \end{smallmatrix}\right)\oplus0_{n-2}\right)D_2$$ has to have a zero permanent. 
    Thus, $0 = -\varphi(a b) + \varphi(a) \varphi(b)$, 
    and an additive $\varphi$ is a field endomorphism.
    \end{proof}
    \begin{proof}[Proof of  Theorem \ref{th:prk1_main}]
    If $\Phi$ maps a line into a  full square, then, by composing it with standard maps in the domain and codomain, we can achieve that the first row, $\R_1$ goes into a full square $\S^{12}_{12}$. In this case, by Lemma~\ref{lem:line->squarepart2}, $\Phi(\M{n})\subseteq(\R_1+\R_2)\cap(\C_1+\C_2)$, i.e., $\Phi$ takes the last of the degenerate possibilities. 
    
    Assume $\Phi$ is not degenerate. Then, as  just shown, it maps lines to lines. Hence, by composing it with  standard maps we can assume that
    $$\Phi(\R_1)\subseteq\R_1,$$
    and since $\Phi$ is nondegenerate, Lemma~\ref{lem:R1-C1} implies that $\Phi(\C_1)$ cannot be contained in a row (and cannot be mapped into a full square), so it is mapped into a column. We can temporarily replace $\Phi$ by its composition  with a suitable permutation of columns in the domain to see, by exactly the same arguments, that every column is mapped into a column. We may further temporarily replace $\Phi$ by its composition with the transposition map in the domain, to see that also rows are mapped into rows. In particular, there are transformations $\sigma,\tau\colon[n]\to[n]$ such that
    $$\Phi(\R_i)\subseteq \R_{\sigma(i)}\quad\hbox{and}\quad \Phi(\C_j)\subseteq\C_{\tau(j)}.$$
   By Lemma~\ref{lem:sigma-tau} this implies that $\Phi$ has the form \eqref{eq:l4.11} for some invertible diagonal matrices $D_1,D_2$. Then by Lemma~\ref{lem:final-onedireciton} it holds that $g_{11}=g_{12}=\ldots=g_{nn}=\varphi$ for some   field endomorphism $\varphi\colon\FF\to\FF$ and
    we have that
    $$\Phi\colon X\mapsto P_\sigma^{T} D_1X^\varphi D_2 P_\tau.\qedhere$$
   \end{proof}
    \begin{proof}[Proof of Theorem~\ref{th:prk1_maina}]  For surjective maps this is shown in Remark~\ref{rem:phisurjective}. However, we prefer to give a self-contained short proof as follows:
    
    By Lemma~\ref{lem:existence}, the maximal additive subsets $\R_i\subseteq \Lambda^{\le1}$ are mapped into some maximal additive subsets of $\Lambda^{\le1}$. By Lemma~\ref{lemma:max_subsets}, these are vector subspaces of dimensions two (for a full square) or $n$ (for a line). Now,
    \begin{equation}
    \label{dimension}
    n^2 = \dim \M{n} = \dim \Phi(\M{n}) \leq \sum_{i = 1}^{n} \dim \mathrm{span}_{\F}  \Phi(\R_i)  \leq n \cdot n = n^2
    \end{equation}
    Consequently all inequalities in \eqref{dimension} are equalities, so we have $\dim \mathrm{span}_{\F}  \Phi(\R_i)  = n$ for all $i$ and $\mathrm{span}_{\F} \Phi(\R_i)  \cap \mathrm{span}_{\F}  \Phi(\R_j)  = 0$ for all $i \neq j$. Since $n > 2$ each row must be mapped into a row or a column.
    If $\Phi(\R_i)\subseteq\R_{i'}$ and $\Phi(\R_j)\subseteq \C_{j'}$, then $\mathrm{span}_{\F}\Phi(\R_i)=\R_{i'}$ and $\mathrm{span}_{\F}\Phi(\R_j)=\C_{j'}$ would intersect in a cell $\FF E_{i'j'}$, a contradiction. Hence,  if one row was mapped into a row, then all rows were mapped into different rows, and likewise if one row was mapped into a column. By surjectivity, rows are actually mapped onto rows or onto columns. Same holds for images of columns. Then, cells $\F E_{ij}=\R_i\cap \C_j$ are mapped into cells. By Lemma~\ref{lem:sigma-tau} this implies that $\Phi$ has the form \eqref{eq:l4.11}. Then by Lemma~\ref{lem:final-onedireciton} it holds that $g_{11}=g_{12}=\ldots=g_{nn}=\varphi$ and the result follows. 
    
    If $\Phi$ is perhaps nonsurjective but preserves per-rank-one in both directions, then we  rely on Theorem~\ref{th:prk1_main}.  Clearly, the first two degenerate maps cannot preserve per-rank one in both directions, since they map every matrix to a matrix of per-rank at most one, and at least one among $E_{11}+\alpha E_{22}$, $\alpha=\pm1$ is not annihilated. Also,  $\Phi$ cannot map a line into nondegenerate square, by Lemma~\ref{lem:TODO}.   So, it suffices to show that
    $$X\mapsto P_{\sigma}^TD_1X^\varphi D_2 P_\tau$$ preserves per-rank one in both directions if and only if transformations $\sigma$ and $\tau$ are injective. Assume otherwise that, say $\sigma$ is not injective. For simplicity, let $\sigma(1)=\sigma(2)=1$. Then, $\R_1$ and $\R_2$ are mapped into $\R_1$, and, by additivity,  both matrices $A=\left(\begin{smallmatrix}1 &1\\ \alpha& \alpha\end{smallmatrix}\right)\oplus 0_{n-2}\in\R_1+\R_2$ (for $\alpha=\pm1$), of per-rank two are mapped into  matrices in $\R_1$, at least one of them nonzero, a contradiction.  Likewise we see that $\tau$ must be injective.
    \end{proof}

\noindent Alexander Guterman\\  Department of Mathematics, Bar-Ilan University, Ramat-Gan 5290002, Israel, alexander.guterman@biu.ac.il \\  Bojan Kuzma\\ University of Primorska, Glagoljaška 8, SI-6000 Koper, Slovenia;
IMFM, Jadranska 19, SI-1000 Ljubljana, Slovenia, bojan.kuzma@upr.si\\
Leonid Ovchinnikov\\ University of Primorska, Glagoljaška 8, SI-6000 Koper, Slovenia,\\ leonid.ovchinnikov3.14@gmail.com

\end{document}